
\documentclass[final,leqno,onefignum,onetabnum]{siamltex1213}

\usepackage{amssymb}
\usepackage{amsmath}
\usepackage{amsfonts}

\newtheorem{remark}{Remark}[section]
\newtheorem{example}{Example}[section]

\newcommand{\limfunc}[1]{{\rm #1\hspace{0.15em}}}

\title{On the two mutually independent factors that determine the convergence of least-squares projection method
\thanks{
The manuscript of this paper could be found at arXiv:$1406.0578$.
}}

\author{
Shukai Du
\thanks{Corresponding author. School of Mathematics and Statistics, Wuhan University, Wuhan, People's Republic of China.
(\email{shukai.du@whu.edu.cn}).}
\and Nailin Du
\thanks{School of Mathematics and Statistics, Wuhan University, Wuhan, People's Republic of China.
(\email{nldu.math@whu.edu.cn}).}}

\begin{document}
\maketitle
\slugger{simax}{xxxx}{xx}{x}{x--x}%slugger should be set to mms, siap, sicomp, sicon, sidma, sima, simax, sinum, siopt, sisc, or sirev

\begin{abstract}
The paper investigates the least-squares projection method for bounded
linear operators, which provides a natural regularization scheme by
projection for many ill-posed problems. Yet, without additional assumptions,
the convergence of this approximation scheme cannot be guaranteed. We reveal
that the convergence of least-squares projection method is determined by two
mutually independent factors -- the \textquotedblleft kernel
approximability\textquotedblright\ and the \textquotedblleft offset
angle\textquotedblright . The kernel approximability is a necessary
condition of convergence described with kernel $\mathcal{N}\left( T\right) $
and its subspaces $\mathcal{N}\left( T\right) \cap X_{n}$, and we give
several equivalent characterizations for it (Theorem \ref{the1.0}). The
offset angle of $X_{n}$ is defined as the largest canonical angle between
space $T^{\ast }T(X_{n})$ and $T^{\dagger }T(X_{n})$ (which are subspaces of
$\mathcal{N}\left( T\right) ^{\bot }$), and it geometrically reflects the
rate of convergence (Theorem \ref{the1.1}). The paper also presents new
observations\ for the unconvergence examples of Seidman \cite[Example 3.1]%
{seidman1980nonconvergence} and Du \cite[Example 2.10]{du2008finite} under
the notions of kernel approximability and offset angle.
\end{abstract}

\begin{keywords}
least-squares projection method, offset angle, kernel approximability
\end{keywords}

\begin{AMS}
47A52; 65J20; 15A09
\end{AMS}

\pagestyle{myheadings}
\thispagestyle{plain}
\markboth{Convergence of Least-squares Projection Method}{Shukai Du and Nailin Du}

\section{Introduction}
In this paper we investigate the least-squares projection method for bounded
linear operators. As is generally known, such investigations are the bases
for numerically solving operator equations of the first and the second kind
(see Du \cite{nailin2005basic} and \cite{du2008finite}, Groetsch-Neubauer
\cite{Groetsch-Neubauer1988}, Groetsch \cite{Groetsch1982}, Luecke-Hickey
\cite{Luecke-Hickey1985}, Seidman \cite{seidman1980nonconvergence},
Spies-Temperini \cite{spies2006arbitrary}, Engl-Hank-Neubauer \cite{Engl2000}%
, Kress \cite{Kress1999}, and the references cited therein).

Let $X$ and $Y$ be Hilbert spaces, and $T\in \mathcal{B}\left( X,Y\right) $,
i.e., $T:X\rightarrow Y$ be a bounded linear operator. Let $T^{\dag }$ and $%
T^{\ast }$ denote its Moore-Penrose inverse and adjoint operator, $\mathcal{D%
}\left( T\right) $, $\mathcal{N}\left( T\right) $, $\mathcal{R}\left(
T\right) $, and $\mathcal{G}\left( T\right) $ denote its domain, kernel,
range, and graph, respectively. If $X_{0}$ and $Y_{0}$ are closed subspaces
of $X$ and $Y$, $P_{X_{0}}$ and $P_{Y_{0}}$ will stand for the orthogonal
projections from $X$ onto $X_{0}$ and from $Y$ onto $Y_{0}$, respectively.
Let $X\times Y$ be the Hilbert space with the inner product defined by%
\begin{equation*}
\left\langle \left( x_{1},y_{1}\right) ,\left( x_{2},y_{2}\right)
\right\rangle :=\left\langle x_{1},x_{2}\right\rangle +\left\langle
y_{1},y_{2}\right\rangle \text{\quad }\forall \left( x_{1},y_{1}\right)
,\left( x_{2},y_{2}\right) \in X\times Y.
\end{equation*}

Let $\left\{ X_{n}\right\} $ be a sequence of finite-dimensional subspaces
of $X$ with $\dim X=\infty $ such that%
\begin{equation*}
\underset{n\rightarrow \infty }{\mathrm{s\hspace{0.15em}}\!\text{%
-\negthinspace }\lim }P_{X_{n}}=I_{X},
\end{equation*}%
and set%
\begin{equation*}
T_{n}:=TP_{X_{n}}\quad \forall n\in
%TCIMACRO{\U{2115} }%
%BeginExpansion
\mathbb{N}
%EndExpansion
\text{.}
\end{equation*}%
We say $\{(X_{n},T_{n})\}_{n\in
%TCIMACRO{\U{2115} }%
%BeginExpansion
\mathbb{N}
%EndExpansion
}$ is a LPA (least-squares projection approximation) for $T$. All of our
discussions in this paper will be based on this setting. Note that, if $%
\left\{ X_{n}\right\} $ is an increasing sequence%
\begin{equation*}
X_{0}\subseteq X_{1}\subseteq X_{2}\subseteq \cdots \text{\quad with}\quad
\overline{\overset{\infty }{\underset{n=0}{\cup }}X_{n}}=X\,,
\end{equation*}%
then $\{(X_{n},T_{n})\}$ is a natural LPA for $T$.

We are interested in approximating $T^{\dag }$ by $T_{n}^{\dag }$ when $\dim
\mathcal{R}\left( T\right) =\infty $, which is the least-squares projection
method for $T$. So the issue of convergence of LPA $\{(X_{n},T_{n})\}$
\begin{equation}
\quad \underset{n\rightarrow \infty }{\mathrm{w\hspace{0.15em}}\text{-}\lim }%
T_{n}^{\,\dag }=T^{\dag }\text{ on }\mathcal{D}\left( T^{\dag }\right) \quad
\text{(\emph{Weak Convergence})}  \label{eqn1.1}
\end{equation}%
and%
\begin{equation}
\quad \underset{n\rightarrow \infty }{\mathrm{s\hspace{0.15em}}\text{-}\lim }%
T_{n}^{\,\dag }=T^{\dag }\text{ on }\mathcal{D}\left( T^{\dag }\right) \quad
\text{(\emph{Strong Convergence})}  \label{eqn1.2}
\end{equation}%
naturally arise. Note that without additional assumptions it cannot be
guaranteed that (\ref{eqn1.1}) or (\ref{eqn1.2}) holds, as Seidman's example
\cite[Example 3.1]{seidman1980nonconvergence} and Du's example \cite[Example
2.10]{du2008finite} show. By Groetsch \cite[Proposition 0]{Groetsch1982} and
Du \cite[Theorem 2.6 with Table 4.1]{du2008finite} there exists the
following convergence result:%
\begin{equation*}
\text{(\ref{eqn1.1})}\Longleftrightarrow \text{(\ref{eqn1.2})}%
\Longleftrightarrow \text{(\ref{eqn1.3})},
\end{equation*}%
where
\begin{equation}
\sup_{n}\left\Vert T_{n}^{\dag }T\right\Vert <+\infty .  \label{eqn1.3}
\end{equation}%
However, as condition (\ref{eqn1.3}) lacks geometric intuition, it is still
difficult for us to choose a suitable LPA such that (\ref{eqn1.3}) holds. By
Du \cite[Theorem 2.2]{du2008finite}, (\ref{eqn1.3}) implies that%
\begin{equation*}
\underset{n\rightarrow \infty }{\mathrm{s\hspace{0.15em}}\text{-}\lim }P_{%
\mathcal{N}\left( T_{n}\right) }=P_{\mathcal{N}\left( T\right) },
\end{equation*}%
which is equivalent to
\begin{equation}
\mathcal{N}\left( T\right) =\left\{ x\in X\,:\,\lim_{n\rightarrow \infty }%
\mathrm{dist\hspace{0.15em}}\left( x,\mathcal{N}\left( T\right) \cap
X_{n}\right) =0\right\} .  \label{eqn1.4}
\end{equation}%
Hence, (\ref{eqn1.4}) is necessary in choosing a suitable LPA for $T$,
namely, (\ref{eqn1.4}) is a necessary condition of (\ref{eqn1.3}). Here, we
remark that (\ref{eqn1.4}) does not naturally hold (see \cite[Example 2.10]%
{du2008finite}), but it is more likely to be satisfied compared to (\ref%
{eqn1.3}) (for instance, when $T$ is injection, (\ref{eqn1.4}) always
holds). However, as (\ref{eqn1.4}) is not a sufficient condition of (\ref%
{eqn1.3}) (see \cite[Example 3.1]{seidman1980nonconvergence}), we still need
some complementary conditions with which (\ref{eqn1.4}) could lead to (\ref%
{eqn1.3}).

In this paper, we aim to find the complementary condition which together
with (\ref{eqn1.4}) constitute a necessary and sufficient condition of the
convergence of least-squares projection method. We hope that the
complementary condition has enough geometrical meanings, so as to give us
new insight about least-squares projection method. We obtain the following
main results:

\begin{itemize}
\item Several equivalent conditions of (\ref{eqn1.4}) are given. Notice that%
\begin{equation*}
\mathcal{N}\left( T_{n}\right) =\mathcal{N}\left( T\right) \cap
X_{n}+X_{n}^{\bot },
\end{equation*}%
therefore, $\mathcal{N}\left( T\right) \cap X_{n}$ will be referred to as
the core of $\mathcal{N}\left( T_{n}\right) $, and the condition (\ref%
{eqn1.4}) will be called as \emph{kernel approximability}$\mathcal{\ }$of
LPA $\left\{ \left( X_{n},T_{n}\right) \right\} $.

\item A new concept called \textquotedblleft \emph{offset
angle\textquotedblright\ }of\emph{\ }$X_{n}$\ is introduced to describe the
complementary condition, which is defined as the largest canonical angle
between space $T^{\ast }T(X_{n})$ and $T^{\dagger }T(X_{n})$. In the case of
$\dim \mathcal{N}\left( T\right) <\infty $, we show that LPA $\left\{ \left(
X_{n},T_{n}\right) \right\} $ is convergent if and only if it has the kernel
approximability and the supreme of all offset angles is less than
perpendicular angle. Moreover, if LPA $\left\{ \left( X_{n},T_{n}\right)
\right\} $ is with kernel approximability, the rate of convergence of LPA $%
\left\{ \left( X_{n},T_{n}\right) \right\} $ is geometrically reflected by
the offset angles.

\item The classical unconvergence example of Seidman is restudied under the
concept of offset angle, and we show that the reason for unconvergence is
actually caused by offset angle of\emph{\ }$X_{n}$ tending towards
perpendicular angle. On the other hand, the unconvergence example of Du is
also restudied, which is with constant zero offset angle, and we show that
the reason for unconvergence is caused by (\ref{eqn1.4}) (kernel
approximability) becoming invalid.
\end{itemize}

In order to expound our main results more precisely, the following notations
are needed: If $\{S_{n}\}$ is a sequence of nonempty subsets of a Banach
space, define%
\begin{equation*}
\begin{array}{c}
\underset{n\rightarrow \infty }{\mathrm{s\hspace{0.15em}}\text{-}\lim }%
S_{n}:=\left\{ x:\text{there is a sequence }\left\{ x_{n}\right\} \text{
such that }S_{n}\ni x_{n}\rightarrow x\right\} ,\medskip  \\
\underset{n\rightarrow \infty }{\mathrm{w\hspace{0.15em}}\text{-}\widetilde{%
\lim }}S_{n}:=\left\{ x:\text{there is a sequence }\left\{ x_{n}\right\}
\text{ such that }\overset{\infty }{\underset{k=n}{\cup }}S_{k}\ni
x_{n}\rightharpoonup x\right\} .%
\end{array}%
\end{equation*}%
If $M$ and $N$ are both closed subspaces of a Hilbert space $H$, define%
\begin{equation*}
\limfunc{gap}\left( M,N\right) :=\max \left\{ \delta \left( M,N\right)
,\delta \left( N,M\right) \right\} ,
\end{equation*}%
where%
\begin{equation*}
\delta \left( M,N\right) :=\left\{
\begin{array}{r}
\medskip \sup \left\{ \mathrm{dist\hspace{0.15em}}\left( x,N\right) :x\in
M,\left\Vert x\right\Vert =1\right\} ,\quad \text{if }M\neq \left\{
0\right\} , \\
0\,,\quad \text{if }M=\left\{ 0\right\} .%
\end{array}%
\right.
\end{equation*}%
$\limfunc{gap}\left( M,N\right) $ is called the \emph{gap} between $M$ and $N
$ (see \cite{Kato1980}). When%
\begin{equation*}
m:=\dim \left( M\right) \leq \dim \left( N\right) <\infty ,
\end{equation*}%
the canonical angles (or principal angles) between $M$ and $N$ can be
defined, which are a sequence of $m$ angles $0\leq \vartheta _{1}\leq
\vartheta _{2}\leq ...\leq \vartheta _{m}\leq \frac{\pi }{2}$. By \cite%
{golub2012matrix}, the canonical angles are defined recursively by%
\begin{equation*}
\cos \vartheta _{k}=\max_{u\epsilon M}\max_{v\epsilon N}\left\langle
u,v\right\rangle =\left\langle u_{k},v_{k}\right\rangle
\end{equation*}%
subject to%
\begin{equation*}
\left\{
\begin{array}{l}
\left\Vert u\right\Vert =\left\Vert v\right\Vert =1, \\
\left\langle u,u_{i}\right\rangle =0\qquad i=1\colon k-1, \\
\left\langle v,v_{i}\right\rangle =0\qquad i=1\colon k-1,%
\end{array}%
\right.
\end{equation*}%
and if $\dim \left( M\right) =\dim \left( N\right) <\infty $, the largest
canonical angle $\vartheta _{m}$ satisfies
\begin{equation}
\sin \vartheta _{m}=\limfunc{gap}\left( M,N\right) .  \label{eqn1.4_1}
\end{equation}

With the above notions, the main theorems of the paper are stated as below:

\begin{theorem}
\label{the1.0} Let $T\in \mathcal{B}\left( X,Y\right) $ have LPA $%
\{(X_{n},T_{n})\}_{n\in
%TCIMACRO{\U{2115} }%
%BeginExpansion
\mathbb{N}
%EndExpansion
}$. Then the following conditions are equivalent:

\textrm{(a)} \emph{Kernel Approximability}: (\ref{eqn1.4}) is valid, namely,%
\begin{equation*}
\underset{n\rightarrow \infty }{\mathrm{s\hspace{0.15em}}\text{-}\lim }%
\mathcal{N}\left( T_{n}\right) =\underset{n\rightarrow \infty }{\mathrm{w%
\hspace{0.15em}}\text{-}\widetilde{\lim }}\mathcal{N}\left( T_{n}\right) =%
\mathcal{N}\left( T\right) .
\end{equation*}

\textrm{(b)} \emph{Inverse-graph Approximability}:%
\begin{equation*}
\underset{n\rightarrow \infty }{\mathrm{s\hspace{0.15em}}\text{-}\lim }%
\mathcal{G}\left( T_{n}^{\,\dag }\right) =\underset{n\rightarrow \infty }{%
\mathrm{w\hspace{0.15em}}\text{-}\widetilde{\lim }}\mathcal{G}\left(
T_{n}^{\,\dag }\right) =\mathcal{G}\left( T^{\dag }\right) .
\end{equation*}

\textrm{(c)} \emph{Bounded-weak Convergence}: If a sequence $\left\{
y_{n}\right\} \subseteq Y$ satisfies%
\begin{equation*}
\sup_{n}\left\Vert T_{n}^{\dag }y_{n}\right\Vert <+\infty \text{\quad
and\quad }\underset{n\rightarrow \infty }{\mathrm{w\hspace{0.15em}}\text{-}%
\lim }y_{n}=y,
\end{equation*}%
then%
\begin{equation*}
y\in \mathcal{D}\left( T^{\dag }\right) \quad \text{and\quad }\underset{%
n\rightarrow \infty }{\mathrm{w\hspace{0.15em}}\text{-}\lim }T_{n}^{\dag
}y=T^{\dagger }y.
\end{equation*}
\end{theorem}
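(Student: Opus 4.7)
The plan is to establish the cycle $(a) \Rightarrow (b) \Rightarrow (c) \Rightarrow (a)$, resting throughout on two elementary structural facts: the decomposition $\mathcal{N}(T_n) = (\mathcal{N}(T) \cap X_n) \oplus X_n^{\bot}$ noted in the Introduction, and the parameterization
\[
\mathcal{G}(T_n^{\,\dag}) = \{(T_n x + z,\, x) : x \in \mathcal{N}(T_n)^{\bot},\ z \in \mathcal{R}(T_n)^{\bot}\},
\]
available because $\mathcal{R}(T_n) = T(X_n)$ is finite-dimensional, hence closed. Before entering the cycle I would record the automatic inclusion $\mathrm{w}\text{-}\widetilde{\lim}\,\mathcal{N}(T_n) \subseteq \mathcal{N}(T)$: given $x_k \in \mathcal{N}(T_{n_k})$ with $x_k \rightharpoonup x$, the decomposition $x_k = a_k + b_k$ with $a_k \in \mathcal{N}(T) \cap X_{n_k}$ and $b_k \in X_{n_k}^{\bot}$, combined with $\langle b_k, v \rangle = \langle b_k, (I - P_{X_{n_k}}) v \rangle$ and $P_{X_n} \to I_X$ strongly, forces $b_k \rightharpoonup 0$; hence $a_k \rightharpoonup x$, and $x \in \mathcal{N}(T)$ by weak closedness.

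For $(a) \Rightarrow (b)$, (a) first upgrades to $P_{\mathcal{N}(T_n)} \to P_{\mathcal{N}(T)}$ strongly: on $\mathcal{N}(T)$ this follows from (\ref{eqn1.4}) via the best-approximation inequality, and on $\mathcal{N}(T)^{\bot}$ any weak cluster point of $P_{\mathcal{N}(T_n)} x$ lies in $\mathcal{N}(T)$ by the preliminary observation, while the identity $\|P_{\mathcal{N}(T_n)} x\|^2 = \langle P_{\mathcal{N}(T_n)} x, x \rangle$ forces the norms to vanish in the limit. The strong inclusion $\mathcal{G}(T^{\dag}) \subseteq \mathrm{s}\text{-}\lim\,\mathcal{G}(T_n^{\dag})$ then follows by setting, for $(y, x) = (Tx + z, x) \in \mathcal{G}(T^{\dag})$, $x_n := P_{\mathcal{N}(T_n)^{\bot}} x \to x$ and $y_n := T x_n + z \to y$, noting $z \in \mathcal{R}(T_n)^{\bot}$ holds trivially from $\mathcal{R}(T_n) \subseteq \mathcal{R}(T)$. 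For the reverse $\mathrm{w}\text{-}\widetilde{\lim}\,\mathcal{G}(T_n^{\dag}) \subseteq \mathcal{G}(T^{\dag})$, given $(y_k, x_k) \in \mathcal{G}(T_{n_k}^{\dag})$ with $(y_k, x_k) \rightharpoonup (y, x)$, I would show $x \in \mathcal{N}(T)^{\bot}$ by replacing any test vector $w \in \mathcal{N}(T)$ with its best approximation in $\mathcal{N}(T) \cap X_{n_k}$, and $y - Tx \in \mathcal{R}(T)^{\bot}$ by replacing any test vector $Tu \in \mathcal{R}(T)$ with $T P_{X_{n_k}} u$, both corrections vanishing by strong approximation.

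For $(b) \Rightarrow (c)$, if $y_n \rightharpoonup y$ and $x_n := T_n^{\dag} y_n$ is norm-bounded, then $(y_n, x_n) \in \mathcal{G}(T_n^{\dag})$ is bounded in $Y \times X$; any weak subsequential limit $(y, x^{\ast})$ lies in $\mathrm{w}\text{-}\widetilde{\lim}\,\mathcal{G}(T_n^{\dag}) = \mathcal{G}(T^{\dag})$, so $y \in \mathcal{D}(T^{\dag})$ and $x^{\ast} = T^{\dag} y$ is uniquely determined, and the usual subsequence principle yields $x_n \rightharpoonup T^{\dag} y$. For $(c) \Rightarrow (a)$, only the inclusion $\mathcal{N}(T) \subseteq \mathrm{s}\text{-}\lim\,\mathcal{N}(T_n)$ remains. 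Given $x \in \mathcal{N}(T)$, set $y_n := T_n x = T P_{X_n} x$; since $Tx = 0$ and $P_{X_n} x \to x$, $y_n \to 0$ strongly, and the identity $T_n^{\dag} T_n = P_{\mathcal{N}(T_n)^{\bot}}$ gives $T_n^{\dag} y_n = P_{\mathcal{N}(T_n)^{\bot}} x$, bounded by $\|x\|$. Hypothesis (c) yields $P_{\mathcal{N}(T_n)^{\bot}} x \rightharpoonup 0$, hence $P_{\mathcal{N}(T_n)} x \rightharpoonup x$; lower semicontinuity of the norm together with $\|P_{\mathcal{N}(T_n)} x\| \le \|x\|$ upgrades this to $P_{\mathcal{N}(T_n)} x \to x$ strongly.

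The main obstacle I expect lies in $(a) \Rightarrow (b)$, where the kernel convergence must be threaded through both coordinate directions of the graph parameterization to yield strong and weak-$\widetilde{\lim}$ graph convergence simultaneously, with some extra care required when $\overline{\mathcal{R}(T)} \neq \mathcal{R}(T)$ so that $T^{\dag}$ is unbounded with only dense proper domain. The other implications rest on routine weak-compactness and lower-semicontinuity arguments once the geometric framework is in place.
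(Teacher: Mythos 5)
Your proposal is correct and follows essentially the same route as the paper's proof: the cycle $(a)\Rightarrow(b)\Rightarrow(c)\Rightarrow(a)$, with the same approximating pairs $\left(T_{n}x_{n}+z_{n},\,x_{n}\right)$ for the strong graph inclusion, the same weak-limit test computations for the reverse inclusion, and the same choice $y_{n}:=T_{n}x$ in $(c)\Rightarrow(a)$. The only differences are organizational: you inline the projection-convergence facts and the graph parameterization that the paper isolates in Lemmas \ref{lem2.0_1}--\ref{lem2.1}, rather than first proving $\mathcal{G}\left(T_{n}\right)\rightarrow\mathcal{G}\left(T\right)$ as a separate step.
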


\begin{theorem}
\label{the1.1} Let $T\in \mathcal{B}\left( X,Y\right) $ with $\dim \mathcal{N%
}\left( T\right) <\infty $ have LPA $\{(X_{n},T_{n})\}_{n\in
%TCIMACRO{\U{2115} }%
%BeginExpansion
\mathbb{N}
%EndExpansion
}$. Set%
\begin{equation}
\theta _{n}:=\arcsin \limfunc{gap}\left( T^{\dag }T\left( X_{n}\right)
,T^{\ast }T\left( X_{n}\right) \right) \quad \forall n\in
%TCIMACRO{\U{2115} }%
%BeginExpansion
\mathbb{N}
%EndExpansion
,  \label{eqn1.5}
\end{equation}%
which is called the $\emph{offset}$ $\emph{angle}$ of $X_{n}$ respect to $T$%
. Then the following propositions hold:

\textrm{(a)} $\left\{ \theta _{n}\right\} $ is a sequence in the interval $%
\left[ 0,\frac{\pi }{2}\right) $, and there holds%
\begin{equation*}
\text{(\ref{eqn1.3})}\Longleftrightarrow \left\{
\begin{array}{l}
\text{(\ref{eqn1.4}),} \\
\sup_{n}\theta _{n}<\frac{\pi }{2}\text{.}%
\end{array}%
\right.
\end{equation*}

\textrm{(b)} If (\ref{eqn1.4}) is valid, then there is a $n_{\ast }\in
%TCIMACRO{\U{2115} }%
%BeginExpansion
\mathbb{N}
%EndExpansion
$ such that, for $n\geq n_{\ast }$,
\begin{equation*}
\left\Vert T_{n}^{\dag }y-T^{\dag }y\right\Vert \leq \sqrt{1+\tan ^{2}\theta
_{n}}\mathrm{dist\hspace{0.15em}}\left( T^{\dag }y,X_{n}\right) \quad
\forall y\in \mathcal{D}\left( T^{\dag }\right) ,
\end{equation*}%
and%
\begin{equation*}
\theta _{n}=0\Longleftrightarrow T_{n}^{\dag }=P_{X_{n}}T^{\dag }\text{ on }%
\mathcal{D}\left( T^{\dag }\right) \Longleftrightarrow \mathcal{N}\left(
T\right) +T^{\ast }T\left( X_{n}\right) \subseteq X_{n}.
\end{equation*}
\end{theorem}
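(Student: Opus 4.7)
The plan is to first observe that, since $\dim\mathcal{N}(T)<\infty$, kernel approximability (\ref{eqn1.4}) forces $\mathcal{N}(T)\subseteq X_n$ for $n\ge n_{\ast}$ (else one could extract from $\mathcal{N}(T)\ominus(\mathcal{N}(T)\cap X_n)$ a subsequential unit limit in $\mathcal{N}(T)$ whose distance to $\mathcal{N}(T)\cap X_n$ does not vanish, contradicting (\ref{eqn1.4})). Under this reduction $M_n:=T^\dag T(X_n)=X_n\cap\mathcal{N}(T)^\perp$, $X_n^\perp\subseteq\mathcal{N}(T)^\perp$, and $\mathcal{N}(T)^\perp=M_n\oplus X_n^\perp$ is an orthogonal direct sum. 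To obtain $\theta_n<\pi/2$, I would note that $M_n$ and $N_n:=T^\ast T(X_n)$ have the common dimension $\dim X_n-\dim(\mathcal{N}(T)\cap X_n)$ (because $P_{\mathcal{N}(T)^\perp}|_{X_n}$ and $T^\ast T|_{X_n}$ share the kernel $\mathcal{N}(T)\cap X_n$); any unit $u\in M_n\cap N_n^\perp$ would then give $0=\langle u,T^\ast Tu\rangle=\|Tu\|^2$, hence $u\in\mathcal{N}(T)\cap\mathcal{N}(T)^\perp=\{0\}$.

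The heart of the argument is the block decomposition of $T^\ast T|_{\mathcal{N}(T)^\perp}$ in the $M_n\oplus X_n^\perp$ splitting, written $\left(\begin{smallmatrix}A_{MM}&A_{MX}\\ A_{XM}&A_{XX}\end{smallmatrix}\right)$, with $A_{MM}^\ast=A_{MM}$ and $A_{XM}=A_{MX}^\ast$; combined with the previous step and the injectivity of $T^\ast T$ on $\mathcal{N}(T)^\perp$ this makes $A_{MM}$ invertible on $M_n$. For $y\in\mathcal{D}(T^\dag)$ I would decompose $x^\dag:=T^\dag y=v+e$ with $v:=P_{X_n}x^\dag\in M_n$ and $e:=(I-P_{X_n})x^\dag\in X_n^\perp$, set $\delta:=T_n^\dag y-v\in M_n$, and read the Galerkin normal equations $T^\ast T(T_n^\dag y-x^\dag)\perp X_n$ blockwise as $A_{MM}\delta=A_{MX}e$, so that $\delta=A_{MM}^{-1}A_{MX}e$; Pythagoras in $X=X_n\oplus X_n^\perp$ then yields $\|T_n^\dag y-T^\dag y\|^2=\|\delta\|^2+\|e\|^2$.

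The hard part is the identity $\|A_{MM}^{-1}A_{MX}\|=\tan\theta_n$. To prove it I would parameterise $u=T^\ast T\delta\in N_n$ by $\delta\in M_n$: in $M_n\oplus X_n^\perp$ coordinates $u$ has components $(A_{MM}\delta,A_{XM}\delta)$, so $\mathrm{dist}(u,M_n)=\|A_{XM}\delta\|$ and $\|u\|^2=\|A_{MM}\delta\|^2+\|A_{XM}\delta\|^2$. Monotonicity of $t\mapsto t/\sqrt{1+t^2}$ then gives $\sin\theta_n=\tau/\sqrt{1+\tau^2}$ with $\tau:=\sup_{\delta\in M_n}\|A_{XM}\delta\|/\|A_{MM}\delta\|=\|A_{XM}A_{MM}^{-1}\|$, whence $\tan\theta_n=\tau$ and, by self-adjointness, $\|A_{MM}^{-1}A_{MX}\|=\|A_{XM}A_{MM}^{-1}\|$. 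This delivers $\|\delta\|\le\tan\theta_n\,\|e\|$ and hence the inequality of (b). The equivalences of (b) then fall out: $\theta_n=0\iff A_{MX}=0\iff T^\ast T(M_n)\subseteq M_n\iff T^\ast T(X_n)\subseteq X_n$, which under $\mathcal{N}(T)\subseteq X_n$ is exactly $\mathcal{N}(T)+T^\ast T(X_n)\subseteq X_n$; and $A_{MX}=0$ is equally equivalent to $\delta=0$ for every $e\in X_n^\perp$, i.e.~to $T_n^\dag=P_{X_n}T^\dag$ on $\mathcal{D}(T^\dag)$, since $e$ sweeps out all of $X_n^\perp$ as $x^\dag$ varies in $\mathcal{N}(T)^\perp$.

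For part (a), (\ref{eqn1.3})$\Rightarrow$(\ref{eqn1.4}) is already known from the paper's cited result, so I may assume $n\ge n_{\ast}$. Taking a unit $e\in X_n^\perp\subseteq\mathcal{N}(T)^\perp$ gives $T^\dag Te=e$ and, by the block computation, $T_n^\dag Te=A_{MM}^{-1}A_{MX}e$, so $\|T_n^\dag T\|\ge\|A_{MM}^{-1}A_{MX}\|=\tan\theta_n$; hence (\ref{eqn1.3}) forces $\sup_n\theta_n<\pi/2$. Conversely, (\ref{eqn1.4}) together with $\sup_n\theta_n<\pi/2$, applied via the inequality of (b) to $y=Tx$, yields $\|T_n^\dag Tx\|\le\bigl(1+1/\cos(\sup_n\theta_n)\bigr)\|x\|$ for $n\ge n_\ast$, so $\sup_n\|T_n^\dag T\|<\infty$. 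The principal technical obstacle is the identity $\tan\theta_n=\|A_{MM}^{-1}A_{MX}\|$: it is the bridge that translates the geometric gap between $M_n$ and $N_n$ into a concrete operator-norm bound on an off-diagonal block, and every remaining step is bookkeeping with self-adjointness and the orthogonal decomposition $\mathcal{N}(T)^\perp=M_n\oplus X_n^\perp$.
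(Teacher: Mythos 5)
Your proof is correct, but it takes a genuinely different route from the paper's. The paper works with the oblique projection $Q_{n}:=T^{\dag }P_{T\left( X_{n}\right) }T$, whose range is $T^{\dag }T\left( X_{n}\right) $ and whose kernel is $\left[ T^{\ast }T\left( X_{n}\right) \right] ^{\bot }$, and invokes the norm identity for idempotents $\left\Vert P_{\mathcal{R}(S)}-P_{\mathcal{R}(S^{\ast })}\right\Vert =\sqrt{1-\left\Vert S\right\Vert ^{-2}}$ (Lemmas \ref{lem3.0}--\ref{lem3.1_0}) to obtain $\left\Vert I-Q_{n}\right\Vert =1/\cos \theta _{n}=\sqrt{1+\tan ^{2}\theta _{n}}$; the error bound then falls out of the exact identity $T_{n}^{\dag }-T^{\dag }=\left( Q_{n}-I\right) P_{X_{n}^{\bot }}T^{\dag }$, and part (a) from $Q_{n}=P_{\mathcal{N}\left( T\right) ^{\bot }}T_{n}^{\dag }T$, which equals $T_{n}^{\dag }T$ once $\mathcal{N}\left( T\right) \subseteq X_{n}$. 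You instead run a Galerkin/Schur-complement analysis: block-decompose $T^{\ast }T|_{\mathcal{N}\left( T\right) ^{\bot }}$ over $M_{n}\oplus X_{n}^{\bot }$, read the normal equations as $A_{MM}\delta =A_{MX}e$, and prove $\tan \theta _{n}=\left\Vert A_{XM}A_{MM}^{-1}\right\Vert $ directly from the definition of the gap by parameterizing $N_{n}=T^{\ast }T\left( M_{n}\right) $. Both arguments rest on the same preliminary reduction (kernel approximability with $\dim \mathcal{N}\left( T\right) <\infty $ forces $\mathcal{N}\left( T\right) \subseteq X_{n}$ eventually, the paper's Lemma \ref{lem2.0}(c)), and your $A_{MM}^{-1}A_{MX}$ is, up to sign, the restriction of the paper's $Q_{n}$ to $X_{n}^{\bot }$, so the two computations are dual to one another. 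The paper's route buys a clean, reusable operator identity and yields Remark \ref{Remark3.0} (the implication without $\dim \mathcal{N}\left( T\right) <\infty $) for free; yours buys a more elementary, self-contained derivation that exposes the C\'{e}a-lemma structure and the finer orthogonal splitting $\left\Vert T_{n}^{\dag }y-T^{\dag }y\right\Vert ^{2}=\left\Vert \delta \right\Vert ^{2}+\left\Vert e\right\Vert ^{2}$. Two small points to make airtight: in checking $\theta _{n}<\pi /2$ for all $n$ (not only $n\geq n_{\ast }$), a unit $u\in M_{n}$ need not lie in $X_{n}$, but $T^{\ast }Tu=T^{\ast }Tw\in N_{n}$ for any preimage $w\in X_{n}$ of $u$ under $P_{\mathcal{N}\left( T\right) ^{\bot }}$, so your inner-product argument still closes; and the identification $\sin \theta _{n}=\delta \left( N_{n},M_{n}\right) $ tacitly uses the symmetry $\delta \left( M_{n},N_{n}\right) =\delta \left( N_{n},M_{n}\right) $ when the gap is less than $1$, which is exactly the paper's Lemma \ref{lem3.0_2}.
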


\begin{remark}
As $\dim (T^{\dag }T\left( X_{n}\right) )=\dim (T^{\ast }T\left(
X_{n}\right) )<\infty $, the offset angle $\theta _{n}$ of $X_{n}$ respect
to $T$ is the largest canonical angle between $T^{\dag }T\left( X_{n}\right)
$ and $T^{\ast }T\left( X_{n}\right) $ by (\ref{eqn1.4_1}).
\end{remark}

The proofs of Theorems \ref{the1.0} and \ref{the1.1} are in Section 2 and
Section 3 respectively. In section 4, the two unconvergence examples of
Seidman and Du will be restudied to further explain the relations among the
three concepts of\emph{\ convergence, kernel approximability }and\emph{\ }$%
\emph{offset}$ $\emph{angle}$. Our conclusions will be collected in Section
5.

\section{Proof of Theorem \protect\ref{the1.0}}

To prove Theorem \ref{the1.0}, we need some lemmas.

\begin{lemma}
\label{lem2.0_1} Let $H$ be a Hilbert space and $\left\{ H_{n}\right\} $ a
sequence of closed subspaces of $H$.

$\mathrm{(a)}$ There holds%
\begin{equation*}
\left\{ P_{H_{n}}\right\} \text{ is strongly convergent\ }%
\Longleftrightarrow \,\underset{n\rightarrow \infty }{\mathrm{s\hspace{0.15em%
}}\text{-}\lim }H_{n}=\,\underset{n\rightarrow \infty }{\mathrm{w\hspace{%
0.15em}}\text{-}\widetilde{\lim }}H_{n}\text{,}
\end{equation*}%
and when $\left\{ P_{H_{n}}\right\} $ is strongly convergent, we have
\begin{equation*}
\underset{n\rightarrow \infty }{\mathrm{s\hspace{0.15em}}\text{-}\lim }%
P_{H_{n}}=P_{M}\,,\quad \text{where }M:=\,\underset{n\rightarrow \infty }{%
\mathrm{s\hspace{0.15em}}\text{-}\lim }H_{n}\,.
\end{equation*}

$\mathrm{(b)}$ If $N$ is a closed subspace of $H$, then%
\begin{equation*}
\underset{n\rightarrow \infty }{\mathrm{s\hspace{0.15em}}\text{-}\lim }%
P_{H_{n}}=P_{N}\ \Longleftrightarrow \ \,\underset{n\rightarrow \infty }{%
\mathrm{w\hspace{0.15em}}\text{-}\lim }P_{H_{n}}=P_{N}\medskip .
\end{equation*}
\end{lemma}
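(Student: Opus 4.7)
The plan is to establish part (a) by proving both directions through the standard characterization of strong limits of orthogonal projections, and then derive part (b) from the norm-plus-weak criterion for strong convergence in a Hilbert space.

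For the ($\Rightarrow$) direction of (a), suppose $\{P_{H_n}\}$ converges strongly to some $P$. Passing to strong limits on $P_{H_n}^2 = P_{H_n} = P_{H_n}^\ast$ shows $P$ is an orthogonal projection, so $P = P_M$ for $M := \mathcal{R}(P)$. For any $x \in M$, the sequence $x_n := P_{H_n} x \in H_n$ satisfies $x_n \to Px = x$, giving $M \subseteq \mathrm{s\hspace{0.15em}}\text{-}\lim H_n$. Conversely, if $x_n \in H_{k_n}$ with $k_n \to \infty$ and $x_n \rightharpoonup x$, then for every $z \in H$,
\[\langle x, z \rangle = \lim_n \langle x_n, z \rangle = \lim_n \langle x_n, P_{H_{k_n}} z \rangle = \langle x, Pz \rangle,\]
since $P_{H_{k_n}} x_n = x_n$ and $P_{H_{k_n}} z \to Pz$ strongly, so $x = Px \in M$ and $\mathrm{w\hspace{0.15em}}\text{-}\widetilde{\lim} H_n \subseteq M$. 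Combining with the obvious inclusion $\mathrm{s\hspace{0.15em}}\text{-}\lim H_n \subseteq \mathrm{w\hspace{0.15em}}\text{-}\widetilde{\lim} H_n$ forces common equality to $M$.

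For the ($\Leftarrow$) direction of (a), set $M := \mathrm{s\hspace{0.15em}}\text{-}\lim H_n = \mathrm{w\hspace{0.15em}}\text{-}\widetilde{\lim} H_n$, which is closed. Fix $x \in H$; since $\|P_{H_n} x\| \leq \|x\|$, every subsequence of $\{P_{H_n} x\}$ admits a weakly convergent sub-subsequence whose limit $y$ lies in $M$ (using $P_{H_n} x \in H_n$ and the definition of $\mathrm{w\hspace{0.15em}}\text{-}\widetilde{\lim}$). To pin $y$ down, take any $z \in M$ and choose $z_n \in H_n$ with $z_n \to z$; then
\[\langle y, z \rangle = \lim \langle P_{H_n} x, z_n \rangle = \lim \langle x, P_{H_n} z_n \rangle = \lim \langle x, z_n \rangle = \langle x, z \rangle,\]
so $y - P_M x \perp M$, whence $y = P_M x$. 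Every weak cluster point of $\{P_{H_n} x\}$ thus equals $P_M x$, giving $P_{H_n} x \rightharpoonup P_M x$. Finally $\|P_{H_n} x\|^2 = \langle P_{H_n} x, x \rangle \to \langle P_M x, x \rangle = \|P_M x\|^2$, and weak convergence plus norm convergence in a Hilbert space yields strong convergence.

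Part (b) follows the same pattern: the nontrivial implication assumes $P_{H_n} \rightharpoonup P_N$, whence for each $x$, $\|P_{H_n} x\|^2 = \langle P_{H_n} x, x \rangle \to \langle P_N x, x \rangle = \|P_N x\|^2$; combined with $P_{H_n} x \rightharpoonup P_N x$ this again gives strong convergence. The main obstacle is the ($\Leftarrow$) direction of (a): one must exploit \emph{both} halves of the hypothesis in tandem---the $\mathrm{w\hspace{0.15em}}\text{-}\widetilde{\lim}$ part to trap weak cluster points of $\{P_{H_n} x\}$ inside $M$, and the $\mathrm{s\hspace{0.15em}}\text{-}\lim$ part to supply the approximating sequence $z_n \to z$ that identifies those cluster points as $P_M x$. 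Once this coupling is arranged, the upgrade from weak to strong convergence is an immediate application of the norm-plus-weak criterion.
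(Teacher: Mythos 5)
Your proof is correct. Note that the paper does not actually prove this lemma: it simply refers the reader to Lemma~2.13 of Du (2005) and Lemma~2.1 of Du (2008), so your argument is a self-contained verification rather than a variant of an in-paper proof. The structure you use is the natural one: in the forward direction of (a) you sandwich $M=\mathcal{R}(P)$ between $\mathrm{s}$-$\lim H_n$ and $\mathrm{w}$-$\widetilde{\lim}H_n$, and in the reverse direction you trap every weak cluster point of $\{P_{H_n}x\}$ in $M$ via the $\mathrm{w}$-$\widetilde{\lim}$ hypothesis, identify it as $P_Mx$ via the $\mathrm{s}$-$\lim$ hypothesis, and then upgrade weak to strong convergence with the norm-plus-weak criterion $\|P_{H_n}x\|^2=\langle P_{H_n}x,x\rangle\to\langle P_Nx,x\rangle=\|P_Nx\|^2$; part (b) is the same upgrade. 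Two small glosses are worth filling in if this were written out in full: the closedness of $M:=\mathrm{s}$-$\lim H_n$ follows from the identity $\mathrm{s}$-$\lim H_n=\{x:\mathrm{dist}(x,H_n)\to0\}$ (the paper separately cites Lemma~2.7 of Du (2005) for this fact), and the claim that a weak cluster point of a subsequence $\{P_{H_{n_j}}x\}$ belongs to $\mathrm{w}$-$\widetilde{\lim}H_n$ requires the routine reindexing step producing, for each $n$, an element of $\cup_{k\geq n}H_k$ converging weakly to that cluster point. Neither is a gap in substance.
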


\begin{proof}
See \cite[Lemma 2.13]{nailin2005basic} and \cite[Lemma 2.1]{du2008finite}.
\end{proof}

\begin{lemma}
\label{lem2.0} Let $T\in \mathcal{B}\left( X,Y\right) $ have LPA $%
\{(X_{n},T_{n})\}_{n\in
%TCIMACRO{\U{2115} }%
%BeginExpansion
\mathbb{N}
%EndExpansion
}$.

$\mathrm{(a)}$ \ There hold%
\begin{equation}
\left\{
\begin{array}{l}
\mathcal{R}\left( T_{n}\right) =T\left( X_{n}\right) \subseteq \mathcal{R}%
\left( T\right) ,\quad \underset{n\rightarrow \infty }{\mathrm{s\hspace{%
0.15em}}\!\text{-\negthinspace }\lim }P_{\mathcal{R}\left( T_{n}\right) }=P_{%
\overline{\mathcal{R}\left( T\right) }}, \\
\mathcal{N}\left( T_{n}\right) =\left( \mathcal{N}\left( T\right) \cap
X_{n}\right) \overset{\bot }{\oplus }X_{n}^{\,\bot },\quad P_{\mathcal{N}%
\left( T_{n}\right) }=P_{\mathcal{N}\left( T\right) \cap X_{n}}+I-P_{X_{n}},%
\end{array}%
\right.  \label{eqn2.0_0}
\end{equation}%
and
\begin{equation}
T_{n}^{\dag }y-T^{\dag }y=\left( T_{n}^{\dag }T-I\right) \left(
I-P_{X_{n}}\right) T^{\dag }y\quad \forall \,y\in \mathcal{D}\left( T^{\dag
}\right) .  \label{eqn2.0}
\end{equation}

$\mathrm{(b)}$ \ There hold%
\begin{equation*}
\underset{n\rightarrow \infty }{\mathrm{s\hspace{0.15em}}\!\text{%
-\negthinspace }\lim }P_{\mathcal{N}\left( T_{n}\right) }=P_{\mathcal{N}%
\left( T\right) }\Longleftrightarrow \,\underset{n\rightarrow \infty }{%
\mathrm{s\hspace{0.15em}}\!\text{-\negthinspace }\lim }P_{\mathcal{N}\left(
T\right) \cap X_{n}}=P_{\mathcal{N}\left( T\right) }\Longleftrightarrow
\text{(\ref{eqn1.4})},
\end{equation*}%
and%
\begin{equation*}
\text{(\ref{eqn1.1})}\Longleftrightarrow \text{(\ref{eqn1.2})}%
\Longleftrightarrow \text{(\ref{eqn1.3})}\Longrightarrow \text{(\ref{eqn1.4}%
).}
\end{equation*}

$\mathrm{(c)}$ \ If (\ref{eqn1.4}) and $\dim \mathcal{N}\left( T\right)
<\infty $ are valid, then there is a $n_{\ast }\in
%TCIMACRO{\U{2115} }%
%BeginExpansion
\mathbb{N}
%EndExpansion
$ such that
\begin{equation}
\left.
\begin{array}{r}
X_{n}\supseteq \mathcal{N}\left( T\right) , \\
\mathcal{N}\left( T_{n}\right) =\mathcal{N}\left( T\right) \overset{\bot }{%
\oplus }X_{n}^{\,\bot }%
\end{array}%
\right\} \text{\quad for }n\geq n_{\ast }\text{,}  \label{eqn2.1}
\end{equation}
\end{lemma}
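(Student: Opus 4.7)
The plan is to dispatch part (a) by direct algebra, and then reduce parts (b) and (c) to the structural identities in (a) together with Lemma \ref{lem2.0_1} and the equivalences already cited in the introduction. For part (a), the range identity $\mathcal{R}(T_n) = T(X_n) \subseteq \mathcal{R}(T)$ is immediate from $T_n = TP_{X_n}$ and $P_{X_n}(X) = X_n$; the strong convergence $\mathrm{s}\text{-}\lim P_{\mathcal{R}(T_n)} = P_{\overline{\mathcal{R}(T)}}$ follows by estimating $\|P_{T(X_n)}Tx - Tx\| \leq \|T\|\,\|P_{X_n}x - x\| \to 0$ for any $x \in X$, extending by density to $\overline{\mathcal{R}(T)}$, and observing that both projections annihilate $\overline{\mathcal{R}(T)}^{\bot}$. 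The kernel decomposition comes from $x \in \mathcal{N}(T_n) \Leftrightarrow P_{X_n}x \in \mathcal{N}(T) \cap X_n$, so splitting $x = P_{X_n}x + (I - P_{X_n})x$ yields the orthogonal direct sum and hence the projection formula in (\ref{eqn2.0_0}).

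The displacement identity (\ref{eqn2.0}) is the only non-routine piece of (a). Setting $u := T^{\dag}y$, I would apply $T_n$ to both sides: the term $T_n(I - P_{X_n})u$ vanishes because $P_{X_n}(I - P_{X_n}) = 0$, and using $Tu = P_{\overline{\mathcal{R}(T)}}y$ with $T(X_n) \subseteq \overline{\mathcal{R}(T)}$ one checks $P_{T(X_n)}y = P_{T(X_n)}Tu$, giving $T_n(\mathrm{LHS} - \mathrm{RHS}) = 0$. Hence $\mathrm{LHS} - \mathrm{RHS} \in \mathcal{N}(T_n)$; since $T_n^{\dag}y$, $P_{X_n}u$, and $T_n^{\dag}T(I - P_{X_n})u$ all lie in $X_n$, this sharpens to $\mathrm{LHS} - \mathrm{RHS} \in X_n \cap \mathcal{N}(T_n) = \mathcal{N}(T) \cap X_n$. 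Testing against an arbitrary $v \in \mathcal{N}(T) \cap X_n$ kills two terms because $T_n^{\dag}y$ and $T_n^{\dag}T(I - P_{X_n})u \in \mathcal{N}(T_n)^{\bot}$, and kills the last because $\langle P_{X_n}u, v\rangle = \langle u, v\rangle = \langle T^{\dag}y, v\rangle = 0$ by $T^{\dag}y \bot \mathcal{N}(T)$; so $\mathrm{LHS} = \mathrm{RHS}$.

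For part (b), the projection formula in (a) combined with $\mathrm{s}\text{-}\lim P_{X_n} = I_X$ gives $\mathrm{s}\text{-}\lim P_{\mathcal{N}(T_n)} = \mathrm{s}\text{-}\lim P_{\mathcal{N}(T) \cap X_n}$, settling the first equivalence. For the second, I would invoke Lemma \ref{lem2.0_1}(a): since $\mathcal{N}(T) \cap X_n \subseteq \mathcal{N}(T)$ and $\mathcal{N}(T)$ is weakly closed, one always has $\mathrm{w}\text{-}\widetilde{\lim}(\mathcal{N}(T) \cap X_n) \subseteq \mathcal{N}(T)$, so the set-limit hypothesis (\ref{eqn1.4}) automatically forces the strong and weak limits to coincide and Lemma \ref{lem2.0_1}(a) delivers strong convergence of $P_{\mathcal{N}(T) \cap X_n}$ to $P_{\mathcal{N}(T)}$. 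The chain $(\ref{eqn1.1}) \Leftrightarrow (\ref{eqn1.2}) \Leftrightarrow (\ref{eqn1.3}) \Rightarrow (\ref{eqn1.4})$ is then quoted verbatim from the results of Groetsch and Du already cited in the introduction.

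For part (c), I would pick a basis $e_1, \dots, e_d$ of the finite-dimensional space $\mathcal{N}(T)$, use (\ref{eqn1.4}) to select $f_i^{(n)} \in \mathcal{N}(T) \cap X_n$ with $f_i^{(n)} \to e_i$ for each $i$, and invoke continuity of the Gram determinant to keep $\{f_1^{(n)}, \dots, f_d^{(n)}\}$ linearly independent for $n$ large; since $\mathcal{N}(T) \cap X_n$ is a subspace of the $d$-dimensional $\mathcal{N}(T)$, this forces $\mathcal{N}(T) \cap X_n = \mathcal{N}(T)$ and hence $\mathcal{N}(T) \subseteq X_n$, after which the second identity in (\ref{eqn2.1}) is an immediate substitution into the decomposition from (a). The main obstacle I anticipate is the displacement identity (\ref{eqn2.0}): the bookkeeping there is delicate, since one must simultaneously track which summands live in $X_n$, in $\mathcal{N}(T_n)^{\bot}$, and in the core $\mathcal{N}(T) \cap X_n$, and invoke $T^{\dag}y \in \mathcal{N}(T)^{\bot}$ at exactly the right step to close the argument.
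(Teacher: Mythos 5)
Your proposal is correct, but in several key places it takes a genuinely different route from the paper. For the range projections, you prove $\mathrm{s}\text{-}\lim P_{\mathcal{R}(T_n)}=P_{\overline{\mathcal{R}(T)}}$ directly from the estimate $\mathrm{dist}(Tx,T(X_n))\leq\|T\|\,\|x-P_{X_n}x\|$ plus density and uniform boundedness, whereas the paper sandwiches the set-limits between $\mathcal{R}(T)$ and $\overline{\mathcal{R}(T)}$ and invokes the closedness of $\mathrm{s}\text{-}\lim$ together with Lemma \ref{lem2.0_1}; yours is more elementary, the paper's reuses machinery it needs anyway. For the displacement identity (\ref{eqn2.0}), the paper simply telescopes: $T_n^{\dag}y-T^{\dag}y=T_n^{\dag}T(I-P_{X_n})T^{\dag}y+(P_{\mathcal{N}(T_n)^{\bot}}-P_{\mathcal{N}(T)^{\bot}})T^{\dag}y$ and then uses $P_{\mathcal{N}(T_n)^{\bot}}T^{\dag}y=P_{X_n}T^{\dag}y$, a three-line computation; your argument (show the difference lies in $X_n\cap\mathcal{N}(T_n)=\mathcal{N}(T)\cap X_n$ by applying $T_n$, then show it is orthogonal to that core using $T^{\dag}y\perp\mathcal{N}(T)$) is longer but makes the geometric role of the core transparent, and I verified each step closes correctly. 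For part (c), the paper upgrades $\mathrm{s}\text{-}\lim P_{\mathcal{N}(T)\cap X_n}=P_{\mathcal{N}(T)}$ to norm convergence via $\dim\mathcal{N}(T)<\infty$ and then quotes a rank-stability theorem of Stewart/Kato to conclude $\dim(\mathcal{N}(T)\cap X_n)=\dim\mathcal{N}(T)$; your Gram-determinant argument with perturbed bases $f_i^{(n)}\to e_i$ reaches the same conclusion without citing perturbation theory. Finally, for the chain $(\ref{eqn1.1})\Leftrightarrow(\ref{eqn1.2})\Leftrightarrow(\ref{eqn1.3})\Rightarrow(\ref{eqn1.4})$ you defer to the cited results of Groetsch and Du, which is legitimate, but note the paper makes this self-contained: the equivalences follow from uniform boundedness plus (\ref{eqn2.0}), and the implication $(\ref{eqn1.3})\Rightarrow(\ref{eqn1.4})$ from the operator identity $P_{\mathcal{N}(T)\cap X_n}-P_{\mathcal{N}(T)}P_{X_n}=(T_n^{\dag}T-I)P_{X_n^{\bot}}P_{\mathcal{N}(T)}(I-P_{X_n^{\bot}})$ obtained by restricting (\ref{eqn2.0}) to $y=TP_{X_n}x$ — a payoff of the displacement identity you might have exploited yourself.
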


\begin{proof}
\textrm{(a)} \ It is clear that%
\begin{equation*}
\mathcal{R}\left( T_{n}\right) =T\left( X_{n}\right) \subseteq \mathcal{R}%
\left( T\right) ,
\end{equation*}%
and%
\begin{equation*}
\mathcal{R}\left( T\right) \subseteq \,\underset{n\rightarrow \infty }{%
\mathrm{s\hspace{0.15em}}\text{-}\lim }\mathcal{R}\left( T_{n}\right)
\subseteq \,\underset{n\rightarrow \infty }{\mathrm{w\hspace{0.15em}}\text{-}%
\widetilde{\lim }}\mathcal{R}\left( T_{n}\right) \subseteq \overline{%
\mathcal{R}\left( T\right) }.
\end{equation*}%
Since $\mathrm{s\hspace{0.15em}}$-$\lim_{n\rightarrow \infty }\mathcal{R}%
\left( T_{n}\right) $ is closed by \cite[Lemma 2.7]{nailin2005basic}, we have%
\begin{equation*}
\underset{n\rightarrow \infty }{\mathrm{s\hspace{0.15em}}\text{-}\lim }%
\mathcal{R}\left( T_{n}\right) =\,\underset{n\rightarrow \infty }{\mathrm{w%
\hspace{0.15em}}\text{-}\widetilde{\lim }}\mathcal{R}\left( T_{n}\right) =%
\overline{\mathcal{R}\left( T\right) }.
\end{equation*}%
By Lemma \ref{lem2.0_1} that is equivalent to%
\begin{equation}
\underset{n\rightarrow \infty }{\mathrm{s\hspace{0.15em}}\!\text{%
-\negthinspace }\lim }P_{\mathcal{R}\left( T_{n}\right) }=P_{\overline{%
\mathcal{R}\left( T\right) }}.  \label{eqn2.1_0}
\end{equation}

It is clear that%
\begin{equation*}
\mathcal{N}\left( T_{n}\right) =\left( \mathcal{N}\left( T\right) \cap
X_{n}\right) \overset{\bot }{\oplus }X_{n}^{\,\bot },\medskip \quad P_{%
\mathcal{N}\left( T_{n}\right) }=P_{\mathcal{N}\left( T\right) \cap
X_{n}}+I-P_{X_{n}},
\end{equation*}%
and hence for all $y\in \mathcal{D}\left( T^{\dag }\right) $%
\begin{eqnarray*}
T_{n}^{\dag }y-T^{\dag }y &=&T_{n}^{\dag }\left( TT^{\dag
}-TP_{X_{n}}T^{\dag }\right) y+\left( T_{n}^{\dag }T_{n}-T^{\dag }T\right)
T^{\dag }y \\
&=&T_{n}^{\dag }T\left( I-P_{X_{n}}\right) T^{\dag }y+\left( P_{\mathcal{N}%
\left( T_{n}\right) ^{\bot }}-P_{\mathcal{N}\left( T\right) ^{\bot }}\right)
T^{\dag }y \\
&=&T_{n}^{\dag }T\left( I-P_{X_{n}}\right) T^{\dag }y+\left(
P_{X_{n}}-I\right) T^{\dag }y \\
&=&\left( T_{n}^{\dag }T-I\right) \left( I-P_{X_{n}}\right) T^{\dag }y.
\end{eqnarray*}

\textrm{(b)} \ It follows from (\ref{eqn2.0_0}) that%
\begin{equation*}
\underset{n\rightarrow \infty }{\mathrm{s\hspace{0.15em}}\!\text{%
-\negthinspace }\lim }P_{\mathcal{N}\left( T_{n}\right) }=P_{\mathcal{N}%
\left( T\right) }\Longleftrightarrow \underset{n\rightarrow \infty }{\mathrm{%
s\hspace{0.15em}}\!\text{-\negthinspace }\lim }P_{\mathcal{N}\left( T\right)
\cap X_{n}}=P_{\mathcal{N}\left( T\right) }.
\end{equation*}%
Note that
\begin{equation*}
\underset{n\rightarrow \infty }{\mathrm{s\hspace{0.15em}}\text{-}\lim }%
\left( \mathcal{N}\left( T\right) \cap X_{n}\right) =\left\{ x\in
X\,:\lim_{n\rightarrow \infty }\mathrm{dist\hspace{0.15em}}\left( x,\mathcal{%
N}\left( T\right) \cap X_{n}\right) =0\right\}
\end{equation*}%
and%
\begin{equation*}
\underset{n\rightarrow \infty }{\mathrm{s\hspace{0.15em}}\text{-}\lim }%
\left( \mathcal{N}\left( T\right) \cap X_{n}\right) \subseteq \underset{%
n\rightarrow \infty }{\mathrm{w\hspace{0.15em}}\text{-}\widetilde{\lim }}%
\left( \mathcal{N}\left( T\right) \cap X_{n}\right) \subseteq \mathcal{N}%
\left( T\right) .
\end{equation*}%
Then, by Lemma \ref{lem2.0_1},%
\begin{eqnarray*}
\text{(\ref{eqn1.4})} &\Longleftrightarrow &\underset{n\rightarrow \infty }{%
\mathrm{s\hspace{0.15em}}\text{-}\lim }\left( \mathcal{N}\left( T\right)
\cap X_{n}\right) =\underset{n\rightarrow \infty }{\mathrm{w\hspace{0.15em}}%
\text{-}\widetilde{\lim }}\left( \mathcal{N}\left( T\right) \cap
X_{n}\right) =\mathcal{N}\left( T\right) . \\
&\Longleftrightarrow &\underset{n\rightarrow \infty }{\mathrm{s\hspace{0.15em%
}}\!\text{-\negthinspace }\lim }P_{\mathcal{N}\left( T\right) \cap X_{n}}=P_{%
\mathcal{N}\left( T\right) } \\
&\Longleftrightarrow &\underset{n\rightarrow \infty }{\mathrm{s\hspace{0.15em%
}}\!\text{-\negthinspace }\lim }P_{\mathcal{N}\left( T_{n}\right) }=P_{%
\mathcal{N}\left( T\right) }.
\end{eqnarray*}

Clearly, by the uniform boundedness principle and (\ref{eqn2.0}) we have that%
\begin{equation*}
\text{(\ref{eqn1.2})}\Longleftrightarrow \text{(\ref{eqn1.1})}%
\Longleftrightarrow \text{(\ref{eqn1.3}).}
\end{equation*}%
From (\ref{eqn2.0}) it follows that%
\begin{equation*}
T_{n}^{\dag }T_{n}-T^{\dag }TP_{X_{n}}=\left( T_{n}^{\dag }T-I\right) \left(
I-P_{X_{n}}\right) T^{\dag }TP_{X_{n}},
\end{equation*}%
so, by (\ref{eqn2.0_0}), there is%
\begin{equation*}
P_{\mathcal{N}\left( T\right) \cap X_{n}}-P_{\mathcal{N}\left( T\right)
}P_{X_{n}}=\left( T_{n}^{\dag }T-I\right) P_{X_{n}^{\bot }}P_{\mathcal{N}%
\left( T\right) }\left( I-P_{X_{n}^{\bot }}\right) .
\end{equation*}%
Hence, (\ref{eqn1.3}) implies that%
\begin{equation*}
P_{\mathcal{N}\left( T\right) \cap X_{n}}\overset{\mathrm{s\hspace{0.15em}}}{%
\rightarrow }P_{\mathcal{N}\left( T\right) }\ \left( n\rightarrow \infty
\right) ,\quad \text{i.e.,\quad (\ref{eqn1.4}) holds.}
\end{equation*}

\textrm{(c)} \ Since (\ref{eqn1.4}) is equivalent to%
\begin{equation*}
\underset{n\rightarrow \infty }{\mathrm{s\hspace{0.15em}}\!\text{%
-\negthinspace }\lim }P_{\mathcal{N}\left( T\right) \cap X_{n}}=P_{\mathcal{N%
}\left( T\right) },
\end{equation*}%
and since
\begin{equation*}
{{\mathcal{N}(T)}\cap X_{n}\subseteq {\mathcal{N}(T),\quad }}\dim \left( {{%
\mathcal{N}(T)}\cap X_{n}}\right) \leq \dim \mathcal{N}\left( T\right)
<\infty ,
\end{equation*}%
there holds%
\begin{equation*}
\underset{n\rightarrow \infty }{\text{\negthinspace }\lim }\left\Vert P_{%
\mathcal{N}\left( T\right) \cap X_{n}}-P_{\mathcal{N}\left( T\right)
}\right\Vert =0.
\end{equation*}%
Thus, there is a $n_{\ast }\in
%TCIMACRO{\U{2115} }%
%BeginExpansion
\mathbb{N}
%EndExpansion
$ such that%
\begin{equation*}
\left\Vert P_{\mathcal{N}\left( T\right) \cap X_{n}}-P_{\mathcal{N}\left(
T\right) }\right\Vert <1\quad \text{for }n\geq n_{\ast }.
\end{equation*}%
Note that this implies that $\mathrm{rank\hspace{0.15em}}P_{\mathcal{N}%
\left( T\right) \cap X_{n}}=\mathrm{rank\hspace{0.15em}}P_{\mathcal{N}\left(
T\right) }$ by \cite[Theorem 2.3]{stewart1977perturbation} (in fact, $P_{%
\mathcal{N}\left( T\right) \cap X_{n}}$ and $P_{\mathcal{N}\left( T\right) }$
are unitarily equivalent by \cite[p.56, Theorem 6.32]{Kato1980}), and
therefore
\begin{equation*}
\dim \left( {{\mathcal{N}(T)}\cap X_{n}}\right) =\dim \mathcal{N}\left(
T\right) \quad \text{for }n\geq n_{\ast }.
\end{equation*}%
So, we have
\begin{equation*}
{X}_{n}\supseteq {{\mathcal{N}(T)}\cap X_{n}={\mathcal{N}(T)}}\quad \text{%
for }n\geq n_{\ast },
\end{equation*}%
and therefore%
\begin{equation*}
\mathcal{N}\left( T_{n}\right) =\left( \mathcal{N}\left( T\right) \cap
X_{n}\right) \overset{\bot }{\oplus }X_{n}^{\,\bot }=\mathcal{N}\left(
T\right) \overset{\bot }{\oplus }X_{n}^{\,\bot }\quad \text{for }n\geq
n_{\ast }.
\end{equation*}%
Thus, we obtain (\ref{eqn2.1}).
\end{proof}

\begin{lemma}
\label{lem2.1} Let $T\in \mathcal{B}\left( X,Y\right) $ have LPA $\left\{
\left( X_{n},T_{n}\right) \right\} $. Then%
\begin{equation*}
\underset{n\rightarrow \infty }{\mathrm{s\hspace{0.15em}}\text{-}\lim }%
\mathcal{G}\left( T_{n}\right) =\underset{n\rightarrow \infty }{\mathrm{w%
\hspace{0.15em}}\text{-}\widetilde{\lim }}\mathcal{G}\left( T_{n}\right) =%
\mathcal{G}\left( T\right) .
\end{equation*}
\end{lemma}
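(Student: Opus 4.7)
The plan is to establish the chain of set inclusions
\[
\mathcal{G}(T)\;\subseteq\;\underset{n\rightarrow \infty }{\mathrm{s\hspace{0.15em}}\text{-}\lim}\mathcal{G}(T_{n})\;\subseteq\;\underset{n\rightarrow \infty }{\mathrm{w\hspace{0.15em}}\text{-}\widetilde{\lim}}\mathcal{G}(T_{n})\;\subseteq\;\mathcal{G}(T),
\]
in which the middle inclusion holds by definition (strong convergence implies weak convergence of the same sequence). This reduces the lemma to verifying the two outer inclusions.

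For the first inclusion I would take an arbitrary $(x,Tx)\in\mathcal{G}(T)$ and simply exhibit the approximating sequence $(x,T_{n}x)=(x,TP_{X_{n}}x)\in\mathcal{G}(T_{n})$. Since $P_{X_{n}}\to I_{X}$ strongly and $T$ is bounded, $TP_{X_{n}}x\to Tx$, hence $(x,TP_{X_{n}}x)\to(x,Tx)$ in $X\times Y$, which is exactly what is needed.

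The third (and only nontrivial) inclusion is where the main work lies. Given $(x,y)\in\mathrm{w}\text{-}\widetilde{\lim}\,\mathcal{G}(T_{n})$, by definition there is a sequence of indices $k(n)\geq n$ and vectors $x_{n}\in X$ such that $(x_{n},T_{k(n)}x_{n})=(x_{n},TP_{X_{k(n)}}x_{n})\rightharpoonup(x,y)$ in $X\times Y$. In particular $x_{n}\rightharpoonup x$ in $X$ and $TP_{X_{k(n)}}x_{n}\rightharpoonup y$ in $Y$. The crux is to show that $P_{X_{k(n)}}x_{n}\rightharpoonup x$, so that weak continuity of $T$ yields $TP_{X_{k(n)}}x_{n}\rightharpoonup Tx$ and hence $y=Tx$. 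For this one writes, for any $z\in X$,
\[
\langle P_{X_{k(n)}}x_{n}-x,z\rangle=\langle x_{n}-x,z\rangle+\langle x_{n}-x,P_{X_{k(n)}}z-z\rangle+\langle x,P_{X_{k(n)}}z-z\rangle ,
\]
and observes that the first term vanishes by weak convergence of $\{x_{n}\}$, the third by strong convergence $P_{X_{k(n)}}z\to z$, and the second by combining $\|P_{X_{k(n)}}z-z\|\to 0$ with the boundedness of $\{x_{n}\}$ guaranteed by weak convergence.

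The main obstacle is precisely this last estimate: one must combine a moving projection $P_{X_{k(n)}}$ with a merely weakly convergent sequence $x_{n}$, so neither strong continuity of the operator nor weak continuity of the projector is available in isolation. The trick is to split the difference as above so that each of the three terms is controlled by a different (already established) ingredient — weak convergence of $\{x_{n}\}$, uniform boundedness of $\{x_{n}\}$, and strong convergence $P_{X_{n}}\to I_{X}$ — after which the rest of the argument is routine.
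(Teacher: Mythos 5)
Your proposal is correct and follows essentially the same route as the paper: the same reduction to the two outer inclusions, the same construction $(x,T_{n}x)\rightarrow(x,Tx)$ for the first, and the same three-term splitting (weak convergence of $\{x_{n}\}$, uniform boundedness of $\{x_{n}\}$, strong convergence $P_{X_{k_{n}}}z\rightarrow z$) for the third. The only cosmetic difference is that you first establish $P_{X_{k(n)}}x_{n}\rightharpoonup x$ and then invoke weak continuity of $T$, whereas the paper tests $\left\langle Tx-y,v\right\rangle$ directly against $T^{\ast}v$; substituting $z=T^{\ast}v$ into your identity recovers the paper's estimate verbatim.
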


\begin{proof}
It is clear that $\mathrm{s\hspace{0.15em}}$-$\lim_{n\rightarrow \infty }%
\mathcal{G}\left( T_{n}\right) \subseteq \mathrm{w\hspace{0.15em}}$-$%
\widetilde{\lim }_{n\rightarrow \infty }\mathcal{G}\left( T_{n}\right) $.
Hence, we need only to show that%
\begin{equation*}
\underset{n\rightarrow \infty }{\mathrm{w\hspace{0.15em}}\text{-}\widetilde{%
\lim }}\mathcal{G}\left( T_{n}\right) \subseteq \mathcal{G}\left( T\right)
\subseteq \underset{n\rightarrow \infty }{\mathrm{s\hspace{0.15em}}\text{-}%
\lim }\mathcal{G}\left( T_{n}\right) .
\end{equation*}%
Let $\left( x,y\right) \in \mathcal{G}\left( T\right) $. Then $\left(
x,T_{n}x\right) \in \mathcal{G}\left( T_{n}\right) $, and $\left(
x,T_{n}x\right) \rightarrow \left( x,y\right) \ \left( n\rightarrow \infty
\right) $. Therefore, $\left( x,y\right) \in \mathrm{s\hspace{0.15em}}$-$%
\lim_{n\rightarrow \infty }\mathcal{G}\left( T_{n}\right) $. This gives that
\begin{equation*}
\mathcal{G}\left( T\right) \subseteq \underset{n\rightarrow \infty }{\mathrm{%
s\hspace{0.15em}}\text{-}\lim }\mathcal{G}\left( T_{n}\right) .
\end{equation*}%
Let $\left( x,y\right) \in \mathrm{w\hspace{0.15em}}$-$\widetilde{\lim }%
_{n\rightarrow \infty }\mathcal{G}\left( T_{n}\right) $. Then there is a
sequence $\left\{ \left( x_{n},y_{n}\right) \right\} $ such that $\cup
_{k=n}^{\infty }\mathcal{G}\left( T_{k}\right) \ni \left( x_{n},y_{n}\right)
\rightharpoonup \left( x,y\right) \ \left( n\rightarrow \infty \right) $.
Thus, there is a sequence $\left\{ k_{n}\right\} $ such that
\begin{equation*}
k_{n}\geq n,\quad x_{n}\rightharpoonup x\ \left( n\rightarrow \infty \right)
,\quad T_{k_{n}}x_{n}=y_{n}\rightharpoonup y\ \left( n\rightarrow \infty
\right) .
\end{equation*}%
Note that for all $v\in Y$ there holds%
\begin{eqnarray*}
\left\vert \left\langle Tx-y,v\right\rangle \right\vert &\leq &\left\vert
\left\langle T\left( x-x_{n}\right) ,v\right\rangle \right\vert +\left\vert
\left\langle \left( T-T_{k_{n}}\right) x_{n},v\right\rangle \right\vert
+\left\vert \left\langle y_{n}-y,v\right\rangle \right\vert \\
&=&\left\vert \left\langle x-x_{n},T^{\ast }v\right\rangle \right\vert
+\left\vert \left\langle x_{n},\left( I-P_{X_{k_{n}}}\right) T^{\ast
}v\right\rangle \right\vert +\left\vert \left\langle y_{n}-y,v\right\rangle
\right\vert .
\end{eqnarray*}%
Thus, we have
\begin{equation*}
\left\langle Tx-y,v\right\rangle =0\ \forall v\in X,\text{\quad that
is,\quad }\left( x,y\right) \in \mathcal{G}\left( T\right) .
\end{equation*}%
This gives that
\begin{equation*}
\underset{n\rightarrow \infty }{\mathrm{w\hspace{0.15em}}\text{-}\widetilde{%
\lim }}\mathcal{G}\left( T_{n}\right) \subseteq \mathcal{G}\left( T\right) .
\end{equation*}
\end{proof}

Now, we can prove Theorem \ref{the1.0} as follows.

\begin{proof}[Proof of Theorem \protect\ref{the1.0}]
By statement (b) of Lemma \ref{lem2.0} and Lemma \ref{lem2.0_1}, we have%
\begin{equation*}
\text{(\ref{eqn1.4})}\Longleftrightarrow \,\underset{n\rightarrow \infty }{%
\mathrm{s\hspace{0.15em}}\text{-}\lim }\mathcal{N}\left( T_{n}\right) =%
\underset{n\rightarrow \infty }{\mathrm{w\hspace{0.15em}}\text{-}\widetilde{%
\lim }}\mathcal{N}\left( T_{n}\right) =\mathcal{N}\left( T\right) \text{.}
\end{equation*}

\textrm{(a) }$\Longrightarrow $\textrm{\ (b)}: \ Let (a) hold. Then, by
Lemma \ref{lem2.0_1}, we have%
\begin{equation}
\underset{n\rightarrow \infty }{\text{s-}\lim }P_{\mathcal{N}\left(
T_{n}\right) ^{\bot }}=P_{\mathcal{N}\left( T\right) ^{\bot }}.
\label{eqn2.2_0}
\end{equation}%
By statement (a) of Lemma \ref{lem2.0},%
\begin{equation}
\underset{n\rightarrow \infty }{\mathrm{s\hspace{0.15em}}\!\text{%
-\negthinspace }\lim }P_{\mathcal{R}\left( T_{n}\right) ^{\bot }}=P_{%
\mathcal{R}\left( T\right) ^{\bot }}.  \label{eqn2.2_1}
\end{equation}%
To prove (b) we need only to show that%
\begin{equation}
\underset{n\rightarrow \infty }{\mathrm{w\hspace{0.15em}}\text{-}\widetilde{%
\lim }}\mathcal{G}\left( T_{n}^{\,\dag }\right) \subseteq \mathcal{G}\left(
T^{\dag }\right) \subseteq \underset{n\rightarrow \infty }{\mathrm{s\hspace{%
0.15em}}\text{-}\lim }\mathcal{G}\left( T_{n}^{\,\dag }\right) .
\label{eqn2.2}
\end{equation}%
Let $\left( y,x\right) \in \mathcal{G}\left( T^{\dag }\right) $. Then
\begin{equation*}
\left( T^{\dag }y,P_{\overline{\mathcal{R}\left( T\right) }}y\right) =\left(
T^{\dag }y,TT^{\dag }y\right) =\left( x,Tx\right) \in \mathcal{G}\left(
T\right) .
\end{equation*}%
By Lemma \ref{lem2.1}, $\mathrm{s\hspace{0.15em}}$-$\lim_{n\rightarrow
\infty }\mathcal{G}\left( T_{n}\right) =\mathcal{G}\left( T\right) $, hence
there is a sequence $\left\{ x_{n}\right\} $ such that
\begin{equation*}
x_{n}\rightarrow x\ \left( n\rightarrow \infty \right) ,\quad
T_{n}x_{n}\rightarrow Tx\ \left( n\rightarrow \infty \right) .
\end{equation*}%
This with (\ref{eqn2.2_0}) and (\ref{eqn2.2_1}) implies that%
\begin{equation*}
\left\{
\begin{array}{l}
T_{n}x_{n}\rightarrow Tx\ \left( n\rightarrow \infty \right) , \\
P_{\mathcal{N}\left( T_{n}\right) ^{\bot }}x_{n}\rightarrow P_{\mathcal{N}%
\left( T\right) ^{\bot }}x\ \left( n\rightarrow \infty \right) , \\
P_{\mathcal{R}\left( T_{n}\right) ^{\bot }}y\rightarrow P_{\mathcal{R}\left(
T\right) ^{\bot }}y\ \left( n\rightarrow \infty \right) ,%
\end{array}%
\right.
\end{equation*}%
and therefore
\begin{equation*}
\left\{
\begin{array}{l}
T_{n}x_{n}+P_{\mathcal{R}\left( T_{n}\right) ^{\bot }}y\rightarrow Tx+P_{%
\mathcal{R}\left( T\right) ^{\bot }}y=y\ \left( n\rightarrow \infty \right) ,
\\
T_{n}^{\dag }\left( T_{n}x_{n}+P_{\mathcal{R}\left( T_{n}\right) ^{\bot
}}y\right) \rightarrow T^{\dag }Tx=T^{\dag }y=x\ \left( n\rightarrow \infty
\right) .%
\end{array}%
\right.
\end{equation*}%
Thus, $\left( y,x\right) \in \mathrm{s\hspace{0.15em}}$-$\lim_{n\rightarrow
\infty }\mathcal{G}\left( T_{n}^{\dag }\right) $, we get%
\begin{equation*}
\mathcal{G}\left( T^{\dag }\right) \subseteq \,\underset{n\rightarrow \infty
}{\mathrm{s\hspace{0.15em}}\text{-}\lim }\mathcal{G}\left( T_{n}^{\,\dag
}\right) .
\end{equation*}%
Let $\left( y,x\right) \in \mathrm{w\hspace{0.15em}}$-$\widetilde{\lim }%
_{n\rightarrow \infty }\mathcal{G}\left( T_{n}^{\,\dag }\right) $. Then
there is a sequence $\left\{ \left( y_{n},x_{n}\right) \right\} $ such that%
\begin{equation*}
\underset{k=n}{\overset{\infty }{\cup }}\mathcal{G}\left( T_{k}^{\,\dag
}\right) \ni \left( y_{n},x_{n}\right) \rightharpoonup \left( y,x\right) \
\left( n\rightarrow \infty \right) .
\end{equation*}%
Hence, there is a sequence $\left\{ k_{n}\right\} $ such that
\begin{equation}
k_{n}\geq n,\quad y_{n}\rightharpoonup y\ \left( n\rightarrow \infty \right)
,\quad T_{k_{n}}^{\,\dag }y_{n}=x_{n}\rightharpoonup x\ \left( n\rightarrow
\infty \right) .  \label{eqn2.2_2}
\end{equation}%
By (\ref{eqn2.0_0}) in statement (a) of Lemma \ref{lem2.0}, we have (\ref%
{eqn2.1_0}) and $T_{k_{n}}^{\,\dag }y_{n}\in \mathcal{N}\left(
T_{k_{n}}\right) ^{\bot }\subseteq X_{k_{n}}$. From (\ref{eqn2.2_2}), (\ref%
{eqn2.2_0}), by Lemma \ref{lem2.0_1}, we have%
\begin{equation*}
x\in \underset{n\rightarrow \infty }{\mathrm{w\hspace{0.15em}}\text{-}%
\widetilde{\lim }}\mathcal{N}\left( T_{n}\right) ^{\bot }=\mathcal{N}\left(
T\right) ^{\bot },
\end{equation*}%
\begin{equation*}
T_{k_{n}}x_{n}=T_{k_{n}}T_{k_{n}}^{\,\dag }y_{n}=TT_{k_{n}}^{\,\dag
}y_{n}\rightharpoonup Tx\ \left( n\rightarrow \infty \right) ,
\end{equation*}%
and for any $v\in X$%
\begin{eqnarray*}
\left\langle T_{k_{n}}x_{n},v\right\rangle &=&\left\langle P_{\mathcal{R}%
\left( T_{k_{n}}\right) }y_{n},v\right\rangle =\left\langle y_{n},P_{%
\mathcal{R}\left( T_{k_{n}}\right) }v\right\rangle \\
&=&\left\langle y_{n},\left( P_{\mathcal{R}\left( T_{k_{n}}\right) }-P_{%
\overline{\mathcal{R}\left( T\right) }}\right) v\right\rangle +\left\langle
y_{n},P_{\overline{\mathcal{R}\left( T\right) }}v\right\rangle \\
&\rightarrow &\left\langle y,P_{\overline{\mathcal{R}\left( T\right) }%
}v\right\rangle =\left\langle P_{\overline{\mathcal{R}\left( T\right) }%
}y,v\right\rangle \ \left( n\rightarrow \infty \right) .
\end{eqnarray*}%
Thus,%
\begin{equation*}
x\in \mathcal{N}\left( T\right) ^{\bot },\ Tx=P_{\overline{\mathcal{R}\left(
T\right) }}y,\quad \text{that is,\quad }\left( y,x\right) \in \mathcal{G}%
\left( T^{\dag }\right) .
\end{equation*}%
So, we obtain that%
\begin{equation*}
\underset{n\rightarrow \infty }{\mathrm{w\hspace{0.15em}}\text{-}\widetilde{%
\lim }}\mathcal{G}\left( T_{n}^{\,\dag }\right) \subseteq \mathcal{G}\left(
T^{\dag }\right) .
\end{equation*}%
Now, (\ref{eqn2.2}) is proved.

\textrm{(b) }$\Longrightarrow $\textrm{\ (c)}: \ Let (b) hold. To prove (c)
let $\left\{ y_{n}\right\} \subseteq Y$ satisfy%
\begin{equation*}
\sup_{n}\left\Vert T_{n}^{\dag }y_{n}\right\Vert <+\infty \text{\quad
and\quad }\underset{n\rightarrow \infty }{\mathrm{w\hspace{0.15em}}\text{-}%
\lim }y_{n}=y.
\end{equation*}%
Then any subsequence $\left\{ T_{n_{k}}^{\dag }y_{n_{k}}\right\} $ of $%
\left\{ T_{n}^{\dag }y_{n}\right\} $ has a subsequence, again denoted by $%
\left\{ T_{n_{k}}^{\dag }y_{n_{k}}\right\} $, converging weakly to some $%
u\in X$. By use of (b) we have that%
\begin{equation*}
\left( y,u\right) \in \underset{n\rightarrow \infty }{\mathrm{w\hspace{0.15em%
}}\text{-}\widetilde{\lim }}\mathcal{G}\left( T_{n}^{\,\dag }\right) =%
\mathcal{G}\left( T^{\dag }\right) .
\end{equation*}%
This gives that%
\begin{equation*}
y\in \mathcal{D}\left( T^{\dag }\right) ,\quad T_{n_{k}}^{\dag
}y_{n_{k}}\rightharpoonup u=T^{\dag }y\quad \left( k\rightarrow \infty
\right)
\end{equation*}%
So, every subsequence of $\left\{ T_{n}^{\dag }y_{n}\right\} $ has a
subsequence converging weakly to $T^{\dag }y$ and hence%
\begin{equation*}
T_{n}^{\dag }y_{n}\rightharpoonup T^{\dag }y\quad \left( n\rightarrow \infty
\right) .
\end{equation*}%
Thus, we obtain (c).

\textrm{(c) }$\Longrightarrow $\textrm{\ (a)}: \ Let (c) hold.\ To prove
(a), by statement (b) of Lemma \ref{lem2.0} we need only to show that%
\begin{equation}
\underset{n\rightarrow \infty }{\mathrm{s\hspace{0.15em}}\text{-}\lim }P_{%
\mathcal{N}\left( T_{n}\right) }=P_{\mathcal{N}\left( T\right) }.
\label{eqn2.2_3}
\end{equation}%
In fact, for any $x\in X$ let%
\begin{equation*}
y_{n}:=T_{n}x\text{ }\left( n\in
%TCIMACRO{\U{2115} }%
%BeginExpansion
\mathbb{N}
%EndExpansion
\right) ,
\end{equation*}%
then%
\begin{equation*}
y_{n}\rightarrow Tx\ \left( n\rightarrow \infty \right) \quad \text{and\quad
}\sup_{n}\left\Vert T_{n}^{\dag }y_{n}\right\Vert \leq \left\Vert
x\right\Vert .
\end{equation*}%
Due to (c), it follows that%
\begin{equation*}
T_{n}^{\dag }T_{n}x=T_{n}^{\dag }y_{n}\rightharpoonup T^{\dag }Tx\quad
\left( n\rightarrow \infty \right) .
\end{equation*}%
Hence we have that%
\begin{equation*}
\underset{n\rightarrow \infty }{\mathrm{w\hspace{0.15em}}\text{-}\lim }P_{%
\mathcal{N}\left( T_{n}\right) }=P_{\mathcal{N}\left( T\right) }.
\end{equation*}%
This is equivalent to (\ref{eqn2.2_3}) by Lemma \ref{lem2.0_1}.
\end{proof}

\section{Proof of Theorem \protect\ref{the1.1}}

To prove Theorem \ref{the1.1}, we need more lemmas besides Lemma \ref{lem2.0}%
.

\begin{lemma}
\label{lem3.0_1} If $P,Q$ are orthogonal-projections on a Hilbert space $H$,
then%
\begin{equation*}
\left\Vert P-Q\right\Vert =\max \left\{ \left\Vert (I-Q)P\right\Vert
,\left\Vert (I-P)Q\right\Vert \right\} \leq 1,
\end{equation*}%
and if $\left\Vert P-Q\right\Vert <1$, there holds%
\begin{equation*}
\left\Vert P-Q\right\Vert =\left\Vert \left( I-Q\right) P\right\Vert
=\left\Vert \left( I-P\right) Q\right\Vert .
\end{equation*}
\end{lemma}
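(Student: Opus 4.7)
The plan is to exploit the key orthogonal decomposition
\begin{equation*}
(P-Q)x \,=\, (I-Q)Px \,-\, Q(I-P)x, \quad x\in H,
\end{equation*}
in which $(I-Q)Px\in\mathcal{N}(Q)$ and $Q(I-P)x\in\mathcal{R}(Q)$ lie in orthogonal subspaces. By the Pythagorean theorem and the elementary estimates $\|(I-Q)Px\|\leq\|(I-Q)P\|\,\|Px\|$ and $\|Q(I-P)x\|\leq\|Q(I-P)\|\,\|(I-P)x\|$, combined with $\|Px\|^2+\|(I-P)x\|^2=\|x\|^2$, one obtains
\begin{equation*}
\|(P-Q)x\|^2\,\leq\,\max\bigl\{\|(I-Q)P\|^2,\,\|Q(I-P)\|^2\bigr\}\|x\|^2.
\end{equation*}
Since $\|Q(I-P)\|=\|(I-P)Q\|$ by taking adjoints, this delivers at once both $\|P-Q\|\leq 1$ and the inequality $\|P-Q\|\leq\max\{\|(I-Q)P\|,\|(I-P)Q\|\}$.

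For the matching lower bound I restrict to unit vectors $x\in\mathcal{R}(P)$, where $(P-Q)x=(I-Q)x=(I-Q)Px$; hence $\|P-Q\|\geq\|(I-Q)P\|$. Swapping $P$ and $Q$ and using $\|P-Q\|=\|Q-P\|$ likewise yields $\|P-Q\|\geq\|(I-P)Q\|$, completing the first identity.

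For the refined statement under $\|P-Q\|<1$, the plan is to translate into positive operators via $\|(I-Q)P\|^2=\|P(I-Q)P\|=\|P-PQP\|$ and $\|(I-P)Q\|^2=\|Q-QPQ\|$. Setting $A=QP$, the classical identity $\sigma(A^*A)\setminus\{0\}=\sigma(AA^*)\setminus\{0\}$ applied to $A^*A=PQP$ and $AA^*=QPQ$ supplies a common nonzero spectrum. The strict bound $\|P-Q\|<1$, together with the first identity already established, forces $\|(I-Q)P\|<1$ and $\|(I-P)Q\|<1$; an approximate-eigenvector argument (any unit sequence $x_n\in\mathcal{R}(P)$ with $\|Qx_n\|\to 0$ would force $\|(I-Q)Px_n\|\to 1$) then rules out $0$ from the spectra of $PQP|_{\mathcal{R}(P)}$ and $QPQ|_{\mathcal{R}(Q)}$. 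Their infima are therefore positive elements of the common nonzero spectrum of $PQP$ and $QPQ$ and so must coincide; since $\|P-PQP\|=1-\inf\sigma(PQP|_{\mathcal{R}(P)})$ and symmetrically for $QPQ$, the two norms are equal.

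The main obstacle is this last step: carefully identifying the infima of the two restricted positive spectra as the \emph{same} point of the common nonzero spectrum of $A^*A$ and $AA^*$, which is precisely where the strict inequality $\|P-Q\|<1$ is indispensable.
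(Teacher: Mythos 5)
Your proof of the first identity is essentially the paper's: the same orthogonal decomposition $(P-Q)x=(I-Q)Px-Q(I-P)x$ into $\mathcal{N}(Q)\perp\mathcal{R}(Q)$, the Pythagorean estimate, and the adjoint relation $\|Q(I-P)\|=\|(I-P)Q\|$; your lower bound via restriction to unit vectors of $\mathcal{R}(P)$ is just a rephrasing of the paper's observation that $(I-Q)P=(P-Q)P$. The real divergence is in the refinement under $\|P-Q\|<1$: the paper simply cites Kato (Theorem 6.34, which constructs a unitary intertwining the two projections), whereas you give a self-contained spectral argument, reducing to $\|(I-Q)P\|^{2}=\|P-PQP\|=1-\inf\sigma\bigl(PQP|_{\mathcal{R}(P)}\bigr)$ and invoking $\sigma(A^{\ast}A)\setminus\{0\}=\sigma(AA^{\ast})\setminus\{0\}$ for $A=QP$. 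This is correct: the strict bound together with the already-proved max identity gives $\|(I-Q)P\|<1$ and $\|(I-P)Q\|<1$, your approximate-eigenvector argument legitimately excludes $0$ from $\sigma\bigl(PQP|_{\mathcal{R}(P)}\bigr)$ and $\sigma\bigl(QPQ|_{\mathcal{R}(Q)}\bigr)$, and then $\sigma(PQP)\setminus\{0\}=\sigma\bigl(PQP|_{\mathcal{R}(P)}\bigr)$ (likewise for $Q$) forces the two infima, which are attained because spectra are closed, to coincide; the degenerate case $P=0$ or $Q=0$ collapses to $P=Q=0$ under $\|P-Q\|<1$ and is harmless. What your route buys is a proof that needs only elementary spectral theory of positive operators rather than Kato's unitary-equivalence machinery, at the cost of being longer than a citation; both routes use the strict inequality in the same essential place.
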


\begin{proof}
Since $(I-Q)P=(P-Q)P,\ (I-P)Q=(Q-P)Q$, it follows that
\begin{equation*}
\left\Vert \left( I-Q\right) P\right\Vert \leq \left\Vert P-Q\right\Vert
,\qquad \left\Vert \left( I-P\right) Q\right\Vert \leq \left\Vert
P-Q\right\Vert ,
\end{equation*}%
and therefore%
\begin{equation*}
\max \left\{ \Vert (I-Q)P\Vert ,\Vert (I-P)Q\Vert \right\} \leq \left\Vert
P-Q\right\Vert .
\end{equation*}%
Note that for each $x\in H$ there hold%
\begin{eqnarray*}
{\Vert (P-Q)x\Vert }^{2} &=&{\Vert (I-Q)Px\Vert }^{2}+{\Vert Q(I-P)x\Vert }%
^{2} \\
&\leq &{\Vert (I-Q)P\Vert }^{2}{\Vert Px\Vert }^{2}+{\Vert Q(I-P)\Vert }^{2}{%
\Vert (I-P)x\Vert }^{2} \\
&=&{\Vert (I-Q)P\Vert }^{2}{\Vert Px\Vert }^{2}+{\Vert {(I-P)}^{\ast
}Q^{\ast }\Vert }^{2}{\Vert (I-P)x\Vert }^{2} \\
&\leq &{(\max \{\Vert (I-Q)P\Vert ,\Vert (I-P)Q\Vert \})}^{2}{\Vert x\Vert }%
^{2},
\end{eqnarray*}%
that is%
\begin{equation*}
\left\Vert P-Q\right\Vert \leq \max \left\{ \Vert (I-Q)P\Vert ,\Vert
(I-P)Q\Vert \right\}.
\end{equation*}%
Thus we obtain
\begin{equation*}
\left\Vert P-Q\right\Vert =\max \{\left\Vert \left( I-Q\right) P\right\Vert
,\left\Vert \left( I-P\right) Q\right\Vert \leq 1.
\end{equation*}

The rest follows from \cite[Theorem 6.34, pp.56-58]{Kato1980}.
\end{proof}

\begin{lemma}
\label{lem3.0_2} Let $M,N$ be two closed subspaces of a Hilbert space $H$.
Then%
\begin{equation*}
\delta \left( M,N\right) =\left\Vert \left( I-P_{N}\right) P_{M}\right\Vert
,\quad \limfunc{gap}\left( M,N\right) =\left\Vert P_{M}-P_{N}\right\Vert ,
\end{equation*}%
and%
\begin{equation*}
\limfunc{gap}\left( M,N\right) =\delta \left( M,N\right) =\delta \left(
N,M\right) \quad \text{if }\limfunc{gap}\left( M,N\right) <1.
\end{equation*}
\end{lemma}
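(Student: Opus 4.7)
The plan is to deduce all three identities from the definition of $\delta(M,N)$ together with Lemma \ref{lem3.0_1}. The main content is the first identity; the other two are formal consequences.

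First, I would verify $\delta(M,N)=\|(I-P_N)P_M\|$ by direct calculation. When $M=\{0\}$ both sides are $0$ by convention, so assume $M\neq\{0\}$. For any $x\in M$ with $\|x\|=1$ one has $\mathrm{dist}(x,N)=\|x-P_Nx\|=\|(I-P_N)x\|=\|(I-P_N)P_Mx\|$ since $P_Mx=x$, so taking the supremum gives $\delta(M,N)\le\|(I-P_N)P_M\|$. Conversely, for any $y\in H$ with $y\notin\mathcal{N}(P_M)$, the vector $x:=P_My/\|P_My\|$ lies in $M$ with $\|x\|=1$, hence
\begin{equation*}
\|(I-P_N)P_My\|=\|P_My\|\cdot\|(I-P_N)x\|\le\|P_My\|\cdot\delta(M,N)\le\delta(M,N)\|y\|,
\end{equation*}
which yields the reverse inequality $\|(I-P_N)P_M\|\le\delta(M,N)$.

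Next, the identity $\mathrm{gap}(M,N)=\|P_M-P_N\|$ follows by combining the first identity with Lemma \ref{lem3.0_1}: applying the first identity with the roles of $M$ and $N$ exchanged gives $\delta(N,M)=\|(I-P_M)P_N\|$, and then
\begin{equation*}
\mathrm{gap}(M,N)=\max\{\delta(M,N),\delta(N,M)\}=\max\{\|(I-P_N)P_M\|,\|(I-P_M)P_N\|\}=\|P_M-P_N\|.
\end{equation*}

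Finally, when $\mathrm{gap}(M,N)<1$ the previous line gives $\|P_M-P_N\|<1$, so the second part of Lemma \ref{lem3.0_1} applies and yields $\|(I-P_N)P_M\|=\|(I-P_M)P_N\|=\|P_M-P_N\|$; translating back through the first identity gives $\delta(M,N)=\delta(N,M)=\mathrm{gap}(M,N)$. No step is genuinely hard here; the only subtlety is bookkeeping the $M=\{0\}$ corner case in the definition of $\delta$, which I would dispose of at the start.
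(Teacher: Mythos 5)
Your proof is correct and follows essentially the same route as the paper: both establish $\delta(M,N)=\left\Vert (I-P_{N})P_{M}\right\Vert$ by the same two-sided supremum argument (after disposing of the $M=\{0\}$ case) and then invoke Lemma \ref{lem3.0_1} for the gap identity and the case $\limfunc{gap}(M,N)<1$. No substantive difference.
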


\begin{proof}
If $M=0$, it is clear that%
\begin{equation*}
\delta \left( M,N\right) =0=\left\Vert \left( I-P_{N}\right)
P_{M}\right\Vert .
\end{equation*}%
Next, assume $M\neq \left\{ 0\right\} $. Then we have that%
\begin{eqnarray*}
\delta \left( M,N\right) &=&\sup \left\{ \mathrm{dist\hspace{0.15em}}\left(
x,N\right) :x\in M,\ \left\Vert x\right\Vert =1\right\} \\
&=&\sup \left\{ \left\Vert \left( I-P_{N}\right) x\right\Vert :x\in M,\
\left\Vert x\right\Vert =1\right\} \\
&=&\sup \left\{ \left\Vert \left( I-P_{N}\right) P_{M}x\right\Vert :x\in M,\
\left\Vert x\right\Vert =1\right\} \\
&\leq &\left\Vert \left( I-P_{N}\right) P_{M}\right\Vert ,
\end{eqnarray*}%
and that for $x\in H$ with $\left\Vert x\right\Vert =1$ there holds%
\begin{eqnarray*}
\left\Vert \left( I-P_{N}\right) P_{M}x\right\Vert &=&\mathrm{dist\hspace{%
0.15em}}\left( P_{M}x,N\right) \\
&\leq &\delta \left( M,N\right) \left\Vert P_{M}x\right\Vert \leq \delta
\left( M,N\right) ,
\end{eqnarray*}%
that is,%
\begin{equation*}
\left\Vert \left( I-P_{N}\right) P_{M}\right\Vert \leq \delta \left(
M,N\right) .
\end{equation*}%
Thus, we obtain
\begin{equation*}
\delta \left( M,N\right) =\left\Vert \left( I-P_{N}\right) P_{M}\right\Vert .
\end{equation*}%
This with Lemma \ref{lem3.0_1} gives that%
\begin{equation*}
\limfunc{gap}\left( M,N\right) =\left\Vert P_{M}-P_{N}\right\Vert ,
\end{equation*}%
and%
\begin{equation*}
\limfunc{gap}\left( M,N\right) =\delta \left( M,N\right) =\delta \left(
N,M\right) \quad \text{if }\limfunc{gap}\left( M,N\right) <1.
\end{equation*}
\end{proof}

\begin{lemma}
\label{lem3.0} Let $S$ be an oblique-projection on a Hilbert space $H$ ($%
S^{2}=S\in \mathcal{B}\left( H\right) $). Then%
\begin{equation*}
\left\Vert P_{\mathcal{R}(S)}-P_{\mathcal{R}(S^{\ast })}\right\Vert
=\left\Vert P_{\mathcal{R}(S)}P_{\mathcal{N}(S)}\right\Vert =\left\Vert P_{%
\mathcal{N}(S)}P_{\mathcal{R}(S)}\right\Vert ,
\end{equation*}%
and
\begin{equation*}
\left\Vert P_{\mathcal{N}(S)}P_{\mathcal{R}(S)}\right\Vert =\left\{
\begin{array}{rl}
\sqrt{1-{\Vert S\Vert }^{-2}}, & \text{as}\ S\neq 0, \\
0, & \text{as}\ S=0;%
\end{array}%
\right.
\end{equation*}
\end{lemma}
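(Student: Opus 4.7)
The plan is to first show that the three quantities in the first displayed identity all equal the single expression $\|P_{\mathcal{N}(S)}P_{\mathcal{R}(S)}\|$, and then to compute this in terms of $\|S\|$. Since $S^{2}=S$, both $\mathcal{R}(S)$ and $\mathcal{R}(S^{\ast})$ are closed, and $\mathcal{R}(S^{\ast})=\mathcal{N}(S)^{\bot}$, so $I-P_{\mathcal{R}(S^{\ast})}=P_{\mathcal{N}(S)}$. Setting $M:=\mathcal{R}(S)$, $N:=\mathcal{R}(S^{\ast})$ and combining Lemmas \ref{lem3.0_1} and \ref{lem3.0_2} yields
\begin{equation*}
\|P_{\mathcal{R}(S)}-P_{\mathcal{R}(S^{\ast})}\| \;=\; \max\bigl\{\|P_{\mathcal{N}(S)}P_{\mathcal{R}(S)}\|,\ \|P_{\mathcal{R}(S)^{\bot}}P_{\mathcal{R}(S^{\ast})}\|\bigr\}.
\end{equation*}
The identity $\|A\|=\|A^{\ast}\|$ at once gives $\|P_{\mathcal{N}(S)}P_{\mathcal{R}(S)}\|=\|P_{\mathcal{R}(S)}P_{\mathcal{N}(S)}\|$, and since $(S^{\ast})^{2}=S^{\ast}$ with $\mathcal{R}(S)^{\bot}=\mathcal{N}(S^{\ast})$, the second argument of the max equals $\|P_{\mathcal{N}(S^{\ast})}P_{\mathcal{R}(S^{\ast})}\|$. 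Once the second formula of the lemma is established in general, applying it to both $S$ and $S^{\ast}$ (and using $\|S^{\ast}\|=\|S\|$) will show that the two entries of the max coincide, collapsing it to the desired value.

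For the upper bound $\|P_{\mathcal{N}(S)}P_{\mathcal{R}(S)}\|\le\sqrt{1-\|S\|^{-2}}$ (assuming $S\neq 0$), I would take any unit $x\in\mathcal{R}(S)$, so $Sx=x$; since $S$ annihilates $\mathcal{N}(S)$, the decomposition $x=P_{\mathcal{N}(S)}x+P_{\mathcal{N}(S)^{\bot}}x$ collapses to $x=SP_{\mathcal{N}(S)^{\bot}}x$, whence $1\le\|S\|\,\|P_{\mathcal{N}(S)^{\bot}}x\|$. Pythagoras then yields $\|P_{\mathcal{N}(S)}x\|^{2}\le 1-\|S\|^{-2}$.

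The matching lower bound is the main technical step. Because $Su=SP_{\mathcal{N}(S)^{\bot}}u$ for every $u$, the norm of $S$ is realised along some sequence $\{u_{n}\}\subseteq\mathcal{N}(S)^{\bot}$ with $\|u_{n}\|=1$ and $\|Su_{n}\|\to\|S\|$. Setting $y_{n}:=Su_{n}\in\mathcal{R}(S)$, the key observation is
\begin{equation*}
S(y_{n}-u_{n})\;=\;S^{2}u_{n}-Su_{n}\;=\;0,
\end{equation*}
so $y_{n}-u_{n}\in\mathcal{N}(S)$ while $u_{n}\in\mathcal{N}(S)^{\bot}$; this is precisely the orthogonal decomposition of $y_{n}$, giving $\|y_{n}\|^{2}=1+\|y_{n}-u_{n}\|^{2}$. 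Normalising $\tilde y_{n}:=y_{n}/\|y_{n}\|\in\mathcal{R}(S)$ then produces $\|P_{\mathcal{N}(S)}\tilde y_{n}\|^{2}=1-\|y_{n}\|^{-2}\to 1-\|S\|^{-2}$, closing the estimate. The degenerate case $S=0$, in which $\mathcal{R}(S)=\{0\}$ and every quantity vanishes, is immediate. The obstacle I anticipate is precisely this lower-bound construction: one must recognise that a norming sequence should be picked inside $\mathcal{N}(S)^{\bot}$, so that $y_{n}=Su_{n}$ splits orthogonally as $u_{n}+(y_{n}-u_{n})$; once that decomposition is in hand, every remaining step is bookkeeping with Lemmas \ref{lem3.0_1} and \ref{lem3.0_2}.
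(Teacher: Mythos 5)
Your proposal is correct, and most of its skeleton coincides with the paper's: the first identity is reduced to the norm formula in exactly the paper's way (Lemma \ref{lem3.0_1} together with $\mathcal{R}(S^{\ast})=\mathcal{N}(S)^{\bot}$ and $\mathcal{R}(S)^{\bot}=\mathcal{N}(S^{\ast})$, then applying the formula to both $S$ and $S^{\ast}$ to collapse the max, and $\|A\|=\|A^{\ast}\|$ for the last equality), and your upper bound is the same computation the paper performs, writing $y=Sy=S(I-P_{\mathcal{N}(S)})y$ for $y\in\mathcal{R}(S)$. The one genuinely different step is the lower bound. The paper derives it from an inequality valid for \emph{every} $x\in H$: using $x=Sx+(I-S)x$ with $(I-S)x\in\mathcal{N}(S)$ it obtains $\|(I-P_{\mathcal{N}(S)})x\|^{2}\ge(1-c^{2})\|Sx\|^{2}$ where $c:=\|P_{\mathcal{N}(S)}P_{\mathcal{R}(S)}\|$, and then concludes $1=\|I-P_{\mathcal{N}(S)}\|^{2}\ge(1-c^{2})\|S\|^{2}$. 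You instead build an explicit almost-norming sequence: unit vectors $u_{n}\in\mathcal{N}(S)^{\bot}$ with $\|Su_{n}\|\to\|S\|$, for which $Su_{n}=u_{n}+(Su_{n}-u_{n})$ is precisely the orthogonal decomposition along $\mathcal{N}(S)^{\bot}\oplus\mathcal{N}(S)$. Both arguments hinge on the same two facts ($H=\mathcal{N}(S)\oplus\mathcal{N}(S)^{\bot}$ and $S(I-S)=0$); yours has the merit of exhibiting concrete vectors of $\mathcal{R}(S)$ that nearly attain $\|P_{\mathcal{N}(S)}P_{\mathcal{R}(S)}\|$, while the paper's version avoids normalization and the passage to a norming sequence. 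When writing yours up, record the two small facts used implicitly: $\mathcal{R}(S)=\mathcal{N}(I-S)$ is closed (so $P_{\mathcal{R}(S)}\tilde y_{n}=\tilde y_{n}$), and $\|y_{n}\|\to\|S\|\ge 1$ so the normalization $\tilde y_{n}=y_{n}/\|y_{n}\|$ is legitimate for $n$ large.
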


\begin{proof}
Let $x\in H$. Since $\mathcal{N}(S)=\mathcal{R}(I-S)$, we have%
\begin{eqnarray*}
\left\Vert x\right\Vert ^{2} &=&\left\Vert {Sx+P_{\mathcal{N}(S)}(I-S)x}%
\right\Vert ^{2} \\
&=&\left\Vert {P_{\mathcal{N}(S)}x}\right\Vert ^{2}+\left\Vert {(I-P_{%
\mathcal{N}(S)})Sx}\right\Vert ^{2} \\
&=&{\Vert P_{\mathcal{N}(S)}x\Vert }^{2}+{\Vert Sx\Vert }^{2}-{\Vert P_{%
\mathcal{N}(S)}Sx\Vert }^{2} \\
&=&{\Vert P_{\mathcal{N}(S)}x\Vert }^{2}+{\Vert Sx\Vert }^{2}-{\Vert P_{%
\mathcal{N}(S)}P_{\mathcal{R}(S)}Sx\Vert }^{2} \\
&\geqslant &{\Vert P_{\mathcal{N}(S)}x\Vert }^{2}+{\Vert Sx\Vert }^{2}\left(
1-{\Vert P_{\mathcal{N}(S)}P_{\mathcal{R}(S)}\Vert }^{2}\right) ,
\end{eqnarray*}%
and therefore
\begin{equation*}
{\Vert (I-P_{\mathcal{N}(S)})x\Vert }^{2}={\Vert x\Vert }^{2}-{\Vert P_{%
\mathcal{N}(S)}x\Vert }^{2}\geq (1-{\Vert P_{\mathcal{N}(S)}P_{\mathcal{R}%
(S)}\Vert }^{2}){\Vert Sx\Vert }^{2}.
\end{equation*}%
This implies
\begin{equation*}
{\Vert I-P_{\mathcal{N}(S)}\Vert }^{2}\geq (1-{\Vert P_{\mathcal{N}(S)}P_{%
\mathcal{R}(S)}\Vert }^{2}){\Vert S\Vert }^{2}.
\end{equation*}%
So, if $S\neq 0$, there holds
\begin{equation*}
1={\Vert I-P_{\mathcal{N}(S)}\Vert }^{2}\geq {\Vert S\Vert }^{2}(1-{\Vert P_{%
\mathcal{N}(S)}P_{\mathcal{R}(S)}\Vert }^{2}),
\end{equation*}%
namely
\begin{equation*}
\left\Vert P_{\mathcal{N}(S)}P_{\mathcal{R}(S)}\right\Vert \geq \sqrt{%
1-\left\Vert {S}\right\Vert ^{-2}}.
\end{equation*}%
To prove the reverse inequality, observe that for each $x\in H$ we have%
\begin{eqnarray*}
{\Vert P_{\mathcal{R}(S)}x\Vert }^{2} &=&{\Vert SP_{\mathcal{R}(S)}x-SP_{%
\mathcal{N}(S)}P_{\mathcal{R}(S)}x\Vert }^{2} \\
&\leq &{\Vert S\Vert }^{2}{\Vert P_{\mathcal{R}(S)}x-P_{\mathcal{N}(S)}P_{%
\mathcal{R}(S)}x\Vert }^{2} \\
&=&{\Vert S\Vert }^{2}\left( {\Vert P_{\mathcal{R}(S)}x\Vert }^{2}-{\Vert P_{%
\mathcal{N}(S)}P_{\mathcal{R}(S)}x\Vert }^{2}\right) .
\end{eqnarray*}%
Hence, if $S\neq 0$, we have
\begin{equation*}
{\Vert P_{\mathcal{N}(S)}P_{\mathcal{R}(S)}x\Vert }^{2}\leq (1-{\Vert S\Vert
}^{-2}){\Vert P_{\mathcal{R}(S)}x\Vert }^{2}\leq (1-{\Vert S\Vert }^{-2}){%
\Vert x\Vert }^{2},
\end{equation*}%
namely
\begin{equation*}
\left\Vert P_{\mathcal{N}(S)}P_{\mathcal{R}(S)}\right\Vert \leq \sqrt{1-{%
\Vert S\Vert }^{-2}}.
\end{equation*}%
Thus, if $S\neq 0$, there holds%
\begin{equation*}
\left\Vert P_{\mathcal{N}(S)}P_{\mathcal{R}(S)}\right\Vert =\sqrt{1-{\Vert
S\Vert }^{-2}}<1.
\end{equation*}%
Now, if $S\neq 0$, by Lemma \ref{lem3.0_1}, we get
\begin{eqnarray*}
\left\Vert P_{\mathcal{R}(S)}-P_{\mathcal{R}(S^{\ast })}\right\Vert &=&\max
\left\{ \left\Vert \left( I-P_{\mathcal{R}(S^{\ast })}\right) P_{\mathcal{R}%
(S)}\right\Vert ,\left\Vert \left( I-P_{\mathcal{R}(S)}\right) P_{\mathcal{R}%
(S^{\ast })}\right\Vert \right\} \\
&=&\max \left\{ \left\Vert P_{\mathcal{N}(S)}P_{\mathcal{R}(S)}\right\Vert
,\left\Vert P_{\mathcal{N}(S^{\ast })}P_{\mathcal{R}(S^{\ast })}\right\Vert
\right\} \\
&=&\sqrt{1-{\Vert S\Vert }^{-2}}=\sqrt{1-{\Vert S}^{\ast }{\Vert }^{-2}}<1,
\end{eqnarray*}%
and therefore%
\begin{equation*}
\left\Vert P_{\mathcal{R}(S)}-P_{\mathcal{R}(S^{\ast })}\right\Vert
=\left\Vert P_{\mathcal{N}(S)}P_{\mathcal{R}(S)}\right\Vert =\left\Vert P_{%
\mathcal{N}(S^{\ast })}P_{\mathcal{R}(S^{\ast })}\right\Vert .
\end{equation*}%
Note that%
\begin{equation*}
\left\Vert P_{\mathcal{R}(S)}P_{\mathcal{N}(S)}\right\Vert =\left\Vert
\left( P_{\mathcal{N}(S)}P_{\mathcal{R}(S)}\right) ^{\ast }\right\Vert
=\left\Vert P_{\mathcal{N}(S)}P_{\mathcal{R}(S)}\right\Vert .
\end{equation*}%
So, it follows that%
\begin{equation*}
\left\Vert P_{\mathcal{R}(S)}-P_{\mathcal{R}(S^{\ast })}\right\Vert
=\left\Vert P_{\mathcal{R}(S)}P_{\mathcal{N}(S)}\right\Vert =\left\Vert P_{%
\mathcal{N}(S)}P_{\mathcal{R}(S)}\right\Vert ,\quad \text{and}
\end{equation*}%
\begin{equation*}
\left\Vert P_{\mathcal{N}(S)}P_{\mathcal{R}(S)}\right\Vert =\left\{
\begin{array}{rl}
\sqrt{1-{\Vert S\Vert }^{-2}}, & \text{as}\ S\neq 0, \\
0, & \text{as}\ S=0.%
\end{array}%
\right.
\end{equation*}
\end{proof}

\begin{lemma}
\label{lem3.1} Let $T\in \mathcal{B}\left( X,Y\right) $ have LPA $%
\{(X_{n},T_{n})\}_{n\in
%TCIMACRO{\U{2115} }%
%BeginExpansion
\mathbb{N}
%EndExpansion
}$, $\left\{ Q_{n}\right\} _{n\in
%TCIMACRO{\U{2115} }%
%BeginExpansion
\mathbb{N}
%EndExpansion
}$ be defined as
\begin{equation}
Q_{n}:=T^{\dag }P_{T\left( X_{n}\right) }T\quad \forall n\in
%TCIMACRO{\U{2115} }%
%BeginExpansion
\mathbb{N}
%EndExpansion
.  \label{eqn3.0}
\end{equation}%
Then $\left\{ Q_{n}\right\} $ is a sequence of oblique-projections in $%
\mathcal{B}\left( X\right) $ ($Q_{n}^{2}=Q_{n}\in \mathcal{B}\left( X\right)
$) which satisfies:%
\begin{equation}
\mathcal{N}\left( Q_{n}\right) =\left[ T^{\ast }T\left( X_{n}\right) \right]
^{\bot },\quad \mathcal{R}\left( Q_{n}\right) =T^{\dag }T\left( X_{n}\right)
,  \label{eqn3.0_1}
\end{equation}%
and%
\begin{equation}
\underset{n\rightarrow \infty }{\mathrm{s\hspace{0.15em}}\text{-}\lim }P_{%
\mathcal{N}\left( Q_{n}\right) }=P_{\mathcal{N}\left( T\right) },\quad
\underset{n\rightarrow \infty }{\mathrm{s\hspace{0.15em}}\text{-}\lim }P_{%
\mathcal{R}\left( Q_{n}\right) }=P_{\mathcal{N}\left( T\right) ^{\bot }}.
\label{eqn3.0_2}
\end{equation}
\end{lemma}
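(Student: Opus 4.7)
The plan is to verify the three assertions of the lemma in order: idempotency of $Q_n$, the range/nullspace identification (\ref{eqn3.0_1}), and the strong limits (\ref{eqn3.0_2}). The two underlying observations I will repeatedly use are that $TT^{\dag}=P_{\overline{\mathcal{R}(T)}}$ acts as the identity on $T(X_n)\subseteq \mathcal{R}(T)$, and that $T^{\dag}T=P_{\mathcal{N}(T)^{\bot}}$.

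For idempotency, expanding $Q_n^{2}=T^{\dag}P_{T(X_n)}(TT^{\dag})P_{T(X_n)}T$ and noting that $P_{T(X_n)}TT^{\dag}P_{T(X_n)}=P_{T(X_n)}$ (because $T(X_n)\subseteq\mathcal{R}(T)$) collapses the product to $Q_n$. For the range, $P_{T(X_n)}T(X)=T(X_n)$ immediately gives $\mathcal{R}(Q_n)=T^{\dag}T(X_n)$. For the null space, $Q_n x=0$ forces $P_{T(X_n)}Tx\in \mathcal{N}(T^{\dag})\cap T(X_n)=\mathcal{R}(T)^{\bot}\cap T(X_n)=\{0\}$, which is equivalent to $Tx\perp T(X_n)$, i.e.\ $x\in [T^{*}T(X_n)]^{\bot}$.

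For the strong limits I will apply Lemma \ref{lem2.0_1}(a) to the sequences $T^{*}T(X_n)$ and $T^{\dag}T(X_n)=P_{\mathcal{N}(T)^{\bot}}(X_n)$, and verify in each case that the strong and weak generalised limits both coincide with $\mathcal{N}(T)^{\bot}$. The outer inclusion $\mathrm{w}\text{-}\widetilde{\lim}\subseteq \mathcal{N}(T)^{\bot}$ is immediate since both subspaces lie in the weakly closed set $\mathcal{N}(T)^{\bot}$. For the reverse inclusion I exploit $P_{X_n}\to I_X$ strongly: for any $u\in X$, $T^{*}TP_{X_n}u\to T^{*}Tu$ with $T^{*}TP_{X_n}u\in T^{*}T(X_n)$, and $P_{\mathcal{N}(T)^{\bot}}P_{X_n}u\to P_{\mathcal{N}(T)^{\bot}}u$ with $P_{\mathcal{N}(T)^{\bot}}P_{X_n}u\in T^{\dag}T(X_n)$; combined with the closedness of the strong generalised limit and the density $\overline{\mathcal{R}(T^{*}T)}=\overline{\mathcal{R}(T^{\dag})}=\mathcal{N}(T)^{\bot}$, these give the desired containments. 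The identity $P_{\mathcal{N}(Q_n)}=I-P_{T^{*}T(X_n)}$ then converts the first subspace limit into the asserted convergence of $P_{\mathcal{N}(Q_n)}$.

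The computations are routine bookkeeping; the only mildly delicate point is the careful invocation of Lemma \ref{lem2.0_1}(a) together with the density fact $\overline{\mathcal{R}(T^{*}T)}=\mathcal{N}(T)^{\bot}$, which I expect to be the trickiest ingredient rather than a serious obstacle.
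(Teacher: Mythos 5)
Your proposal is correct and follows essentially the same route as the paper: the same algebraic identifications of $\mathcal{R}(Q_n)$ and $\mathcal{N}(Q_n)$, and for \eqref{eqn3.0_2} the same sandwich argument $\mathcal{R}(S)\subseteq \mathrm{s}\text{-}\lim S(X_n)\subseteq \mathrm{w}\text{-}\widetilde{\lim}\,S(X_n)\subseteq\overline{\mathcal{R}(S)}$ applied to $S=T^{\ast}T$ and $S=P_{\mathcal{N}(T)^{\bot}}$, which is exactly how the paper recycles \eqref{eqn2.1_0}. The only point you skip is that $Q_n\in\mathcal{B}(X)$ is itself an assertion requiring justification since $T^{\dag}$ is unbounded in general; the paper invokes the closed graph theorem for $T^{\dag}|_{T(X_n)}$, though it also follows at once from $\dim T(X_n)<\infty$.
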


\begin{proof}
Since $T^{\dag }$ is a closed operator and $T\left( X_{n}\right) \subseteq
\mathcal{D}\left( T^{\dag }\right) $ ($\forall n$), due to the closed graph
theorem and $T\left( X_{n}\right) $ being closed in $Y$, we see that each $%
\left. T^{\dag }\right\vert _{T\left( X_{n}\right) }$ is a bound linear
operator. Hence, $Q_{n}:=T^{\dag }P_{T\left( X_{n}\right) }T\in \mathcal{B}%
\left( X\right) $, and
\begin{equation*}
Q_{n}^{2}=T^{\dag }P_{T\left( X_{n}\right) }P_{\overline{\mathcal{R}\left(
T\right) }}P_{T\left( X_{n}\right) }T=Q_{n}.
\end{equation*}%
It is clear that%
\begin{equation*}
\mathcal{N}\left( Q_{n}\right) =\mathcal{N}\left( P_{T\left( X_{n}\right)
}T\right) =\mathcal{R}\left( T^{\ast }P_{T\left( X_{n}\right) }\right)
^{\bot }=\left[ T^{\ast }T\left( X_{n}\right) \right] ^{\bot },\quad
\end{equation*}%
and%
\begin{equation*}
\mathcal{R}\left( Q_{n}\right) =T^{\dag }T\left( X_{n}\right) =P_{\mathcal{N}%
\left( T\right) ^{\bot }}\left( X_{n}\right) =P_{\overline{\mathcal{R}\left(
T^{\ast }\right) }}\left( X_{n}\right) .
\end{equation*}%
Thus, we have (\ref{eqn3.0_1}) and%
\begin{equation*}
P_{\mathcal{N}\left( Q_{n}\right) }=I-P_{T^{\ast }T\left( X_{n}\right)
},\quad P_{\mathcal{R}\left( Q_{n}\right) }=P_{P_{\overline{\mathcal{R}%
\left( T^{\ast }\right) }}\left( X_{n}\right) }.
\end{equation*}%
Note that (\ref{eqn2.1_0}) holds by statement (a) of Lemma \ref{lem2.0}.
Since $T^{\ast }T,P_{\overline{\mathcal{R}\left( T^{\ast }\right) }}\in
\mathcal{B}\left( X\right) $, by replacing $T$ with $T^{\ast }T$ or $P_{%
\overline{\mathcal{R}\left( T^{\ast }\right) }}$ in (\ref{eqn2.1_0}), it
follows that
\begin{equation*}
\underset{n\rightarrow \infty }{\mathrm{s\hspace{0.15em}}\!\text{%
-\negthinspace }\lim }P_{T^{\ast }T\left( X_{n}\right) }=P_{\overline{%
\mathcal{R}\left( T^{\ast }T\right) }}=P_{\mathcal{N}\left( T\right) ^{\bot
}}
\end{equation*}%
and%
\begin{equation*}
\underset{n\rightarrow \infty }{\mathrm{s\hspace{0.15em}}\!\text{%
-\negthinspace }\lim }P_{P_{\overline{\mathcal{R}\left( T^{\ast }\right) }%
}\left( X_{n}\right) }=P_{\overline{\mathcal{R}\left( T^{\ast }\right) }}=P_{%
\mathcal{N}\left( T\right) ^{\bot }}.
\end{equation*}%
Thus, (\ref{eqn3.0_2}) holds.
\end{proof}

\begin{lemma}
\label{lem3.1_0} Let $T\in \mathcal{B}\left( X,Y\right) $ have LPA $%
\{(X_{n},T_{n})\}_{n\in
%TCIMACRO{\U{2115} }%
%BeginExpansion
\mathbb{N}
%EndExpansion
}$, $\left\{ Q_{n}\right\} _{n\in
%TCIMACRO{\U{2115} }%
%BeginExpansion
\mathbb{N}
%EndExpansion
}$ be defined as (\ref{eqn3.0}). Then, for all $n\in
%TCIMACRO{\U{2115} }%
%BeginExpansion
\mathbb{N}
%EndExpansion
$,%
\begin{equation*}
1\leq \left\Vert I-Q_{n}\right\Vert <+\infty ,
\end{equation*}%
\begin{equation*}
\left\Vert P_{T^{\ast }T\left( X_{n}\right) }-P_{T^{\dag }T\left(
X_{n}\right) }\right\Vert =\sqrt{1-{\Vert I-Q_{n}\Vert }^{-2}}\in \left[
0,1\right) .
\end{equation*}
\end{lemma}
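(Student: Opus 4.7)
The plan is to apply Lemma \ref{lem3.0} with $S := I - Q_n$. Since $Q_n$ is an oblique projection by Lemma \ref{lem3.1}, so is $I - Q_n$, and its range/kernel swap with those of $Q_n$: using (\ref{eqn3.0_1}),
\begin{equation*}
\mathcal{R}(I - Q_n) = \mathcal{N}(Q_n) = [T^{\ast} T(X_n)]^{\bot}, \qquad \mathcal{N}(I - Q_n) = \mathcal{R}(Q_n) = T^{\dag} T(X_n).
\end{equation*}
Taking orthogonal complements then gives $\mathcal{R}((I - Q_n)^{\ast}) = \mathcal{N}(I - Q_n)^{\bot} = [T^{\dag} T(X_n)]^{\bot}$. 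Thus Lemma \ref{lem3.0} applied to $S = I - Q_n$ yields, provided $I - Q_n \neq 0$,
\begin{equation*}
\bigl\| P_{[T^{\ast} T(X_n)]^{\bot}} - P_{[T^{\dag} T(X_n)]^{\bot}} \bigr\| = \sqrt{1 - \|I - Q_n\|^{-2}}.
\end{equation*}

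Next I would use the elementary identity $P_{M^{\bot}} - P_{N^{\bot}} = (I - P_M) - (I - P_N) = P_N - P_M$, which shows that the left side above is exactly $\|P_{T^{\ast} T(X_n)} - P_{T^{\dag} T(X_n)}\|$. This delivers the claimed equality, and the value lies in $[0, 1)$ because $\sqrt{1 - \|I - Q_n\|^{-2}} < 1$ whenever $\|I - Q_n\|$ is finite.

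For the bounds on $\|I - Q_n\|$, finiteness is immediate since $Q_n \in \mathcal{B}(X)$. The lower bound $\|I - Q_n\| \geq 1$ follows from the standard fact that any nonzero bounded idempotent on a Banach space has norm at least $1$ (take a unit vector in its range). So it remains to check $I - Q_n \neq 0$, equivalently $Q_n \neq I$. But $\mathcal{R}(Q_n) = T^{\dag} T(X_n)$ is a linear image of the finite-dimensional subspace $X_n$ and is therefore finite-dimensional, while the LPA setting assumes $\dim X = \infty$. Hence $\mathcal{R}(Q_n) \neq X$, so $Q_n \neq I$ and $I - Q_n$ is a genuine nonzero idempotent, validating the application of Lemma \ref{lem3.0}.

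The main technical point, and the only place where one must be careful, is the orthogonal-complement bookkeeping in identifying $\mathcal{R}((I - Q_n)^{\ast})$ with $[T^{\dag} T(X_n)]^{\bot}$ and then converting the projection difference back to the original subspaces; everything else is a direct substitution into Lemmas \ref{lem3.1} and \ref{lem3.0} together with the dimension argument $\dim T^{\dag} T(X_n) \leq \dim X_n < \infty = \dim X$.
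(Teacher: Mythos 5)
Your proposal is correct and follows essentially the same route as the paper: the paper likewise applies Lemma \ref{lem3.0} to $S=I-Q_{n}$, identifies $\mathcal{R}(I-Q_{n})=\mathcal{N}(Q_{n})=[T^{\ast}T(X_{n})]^{\bot}$ and $\mathcal{R}(I-Q_{n}^{\ast})=\mathcal{R}(Q_{n})^{\bot}=[T^{\dag}T(X_{n})]^{\bot}$ via (\ref{eqn3.0_1}), and gets $I-Q_{n}\neq 0$ from $\dim\mathcal{R}(Q_{n})<\dim X=\infty$. Your extra step making explicit the identity $P_{M^{\bot}}-P_{N^{\bot}}=P_{N}-P_{M}$ is exactly what the paper uses implicitly in passing from $\left\Vert P_{T^{\ast}T(X_{n})}-P_{T^{\dag}T(X_{n})}\right\Vert$ to $\left\Vert P_{\mathcal{N}(Q_{n})}-P_{\mathcal{R}(Q_{n})^{\bot}}\right\Vert$.
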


\begin{proof}
Since $\dim \mathcal{R}\left( Q_{n}\right) <\dim X$ ($=\infty $), then $%
I-Q_{n}\neq 0$, and therefore%
\begin{equation*}
1\leq \left\Vert I-Q_{n}\right\Vert <+\infty \quad (\forall n\in
%TCIMACRO{\U{2115} }%
%BeginExpansion
\mathbb{N}
%EndExpansion
).
\end{equation*}%
By Lemma \ref{lem3.0}, we have
\begin{eqnarray*}
\left\Vert P_{\mathcal{R}(I-Q_{n})}-P_{\mathcal{R}(I-Q_{n}^{\ast
})}\right\Vert &=&\left\Vert P_{\mathcal{R}(I-Q_{n})}P_{\mathcal{N}%
(I-Q_{n})}\right\Vert \\
&=&\left\Vert P_{\mathcal{N}(I-Q_{n})}P_{\mathcal{R}(I-Q_{n})}\right\Vert \\
&=&\sqrt{1-{\Vert I-Q_{n}\Vert }^{-2}}\in \left[ 0,1\right) .
\end{eqnarray*}%
Note that, by Lemma \ref{lem3.1},
\begin{equation*}
\mathcal{N}\left( Q_{n}\right) =\left[ T^{\ast }T\left( X_{n}\right) \right]
^{\bot },\quad \mathcal{R}\left( Q_{n}\right) =T^{\dag }T\left( X_{n}\right)
.
\end{equation*}%
Hence, there holds%
\begin{eqnarray*}
\left\Vert P_{T^{\ast }T\left( X_{n}\right) }-P_{T^{\dag }T\left(
X_{n}\right) }\right\Vert &=&\left\Vert P_{\mathcal{N}(Q_{n})}-P_{\mathcal{R}%
(Q_{n})^{\bot }}\right\Vert \\
&=&\left\Vert P_{\mathcal{R}(I-Q_{n})}-P_{\mathcal{R}(I-Q_{n}^{\ast
})}\right\Vert \\
&=&\sqrt{1-{\Vert I-Q_{n}\Vert }^{-2}}\in \left[ 0,1\right) .
\end{eqnarray*}
\end{proof}

Now, we can present the proof of Theorem \ref{the1.1}.

\begin{proof}[Proof of Theorem \protect\ref{the1.1}]
By (\ref{eqn1.5}) we see that
\begin{equation*}
\sin \theta _{n}=\limfunc{gap}\left( T^{\dag }T\left( X_{n}\right) ,T^{\ast
}T\left( X_{n}\right) \right) ,\quad n\in
%TCIMACRO{\U{2115} }%
%BeginExpansion
\mathbb{N}
%EndExpansion
.
\end{equation*}%
This, by Lemma \ref{lem3.0_2} and Lemma \ref{lem3.1_0}, gives that%
\begin{equation}
\sin \theta _{n}=\left\Vert P_{T^{\dag }T\left( X_{n}\right) }-P_{T^{\ast
}T\left( X_{n}\right) }\right\Vert =\sqrt{1-{\Vert I-Q_{n}\Vert }^{-2}}\in %
\left[ 0,1\right) ,\quad n\in
%TCIMACRO{\U{2115} }%
%BeginExpansion
\mathbb{N}
%EndExpansion
.  \label{eqn3.7}
\end{equation}

\textrm{(a)} \ It is clear from (\ref{eqn3.7}) that $\left\{ \theta
_{n}\right\} $ is a sequence in the interval $\left[ 0,\frac{\pi }{2}\right)
$, and%
\begin{equation}
{\Vert I-Q_{n}\Vert }=\frac{1}{\cos \theta _{n}}=\sqrt{1+\tan ^{2}\theta _{n}%
}\in \left[ 1,+\infty \right) ,\quad n\in
%TCIMACRO{\U{2115} }%
%BeginExpansion
\mathbb{N}
%EndExpansion
.  \label{eqn3.1}
\end{equation}
Note that%
\begin{equation*}
P_{\mathcal{N}\left( T\right) ^{\bot }}T_{n}^{\dag }T=T^{\dag }TT_{n}^{\dag
}T=T^{\dag }T_{n}T_{n}^{\dag }T=T^{\dag }P_{\mathcal{R}\left( T_{n}\right)
}T=Q_{n},
\end{equation*}%
and therefore%
\begin{equation}
Q_{n}=P_{\mathcal{N}\left( T\right) ^{\bot }}T_{n}^{\dag }T\quad \forall
n\in
%TCIMACRO{\U{2115} }%
%BeginExpansion
\mathbb{N}
%EndExpansion
.  \label{eqn3.1_0}
\end{equation}

Assume that (\ref{eqn1.3}) is valid, this with (\ref{eqn3.1_0}) implies that%
\begin{equation}
\sup_{n}\left\Vert Q_{n}\right\Vert <+\infty ,\quad \sup_{n}\left\Vert
I-Q_{n}\right\Vert <+\infty .  \label{eqn3.2}
\end{equation}%
Hence, from (\ref{eqn3.2}) and (\ref{eqn3.1}) we obtain%
\begin{equation*}
\sup_{n}\theta _{n}<\frac{\pi }{2}.
\end{equation*}%
In addition, by the assertion (b) of Lemma \ref{lem2.0}, we also have%
\begin{equation*}
\text{(\ref{eqn1.3}) }\Longrightarrow \text{ (\ref{eqn1.4}).}
\end{equation*}%
Thus, there holds%
\begin{equation*}
\text{(\ref{eqn1.3})}\Longrightarrow \left\{
\begin{array}{l}
\text{(\ref{eqn1.4}),} \\
\sup_{n}\theta _{n}<\frac{\pi }{2}.%
\end{array}%
\right.
\end{equation*}%
Inversely, suppose (\ref{eqn1.4}) and $\sup_{n}\theta _{n}<\frac{\pi }{2}$
hold. Then (\ref{eqn3.2}) holds by (\ref{eqn3.1}). Due to the assertion (c)
of Lemma \ref{lem2.0}, it follows from (\ref{eqn1.4}) with $\dim \mathcal{N}%
\left( T\right) <\infty $ that there is a $n_{\ast }\in
%TCIMACRO{\U{2115} }%
%BeginExpansion
\mathbb{N}
%EndExpansion
$ such that (\ref{eqn2.1}) holds and hence%
\begin{equation}
{\mathcal{R}(T_{n}^{\dag })}={\mathcal{N}(T_{n})}^{\perp }={{\mathcal{N}(T)}}%
^{\perp }\cap X_{n}\text{ for }n\geq n_{\ast }\text{.}  \label{eqn3.3}
\end{equation}%
From (\ref{eqn3.1_0}) and (\ref{eqn3.3}) we have%
\begin{equation}
T_{n}^{\dag }T=Q_{n}+P_{\mathcal{N}\left( T\right) }T_{n}^{\dag
}T=Q_{n}\quad \text{for }n\geq n_{\ast }.  \label{eqn3.4}
\end{equation}%
Hence, (\ref{eqn1.3}) follows from (\ref{eqn3.4}) and (\ref{eqn3.2}).

\textrm{(b)} \ Let (\ref{eqn1.4}) (with $\dim \mathcal{N}\left( T\right)
<\infty $) hold. Then (\ref{eqn2.0}), (\ref{eqn2.1}), (\ref{eqn3.1}), and (%
\ref{eqn3.4}) hold by Lemmas \ref{lem2.0}, \ref{lem3.0_2}, and \ref{lem3.1_0}%
. Hence, there is a $n_{\ast }\in
%TCIMACRO{\U{2115} }%
%BeginExpansion
\mathbb{N}
%EndExpansion
$ such that
\begin{equation}
T_{n}^{\dag }-T^{\dag }=\left( Q_{n}-I\right) P_{X_{n}^{\,\bot }}T^{\dag
}\quad \text{on }\mathcal{D}\left( T^{\dag }\right) ,\quad \text{for }n\geq
n_{\ast },  \label{eqn3.5}
\end{equation}%
and%
\begin{equation}
\left.
\begin{array}{c}
X_{n}=\mathcal{N}\left( T\right) \overset{\bot }{\oplus }\left( X_{n}\cap
\mathcal{N}\left( T\right) ^{\bot }\right) , \\
P_{X_{n}}=P_{\mathcal{N}\left( T\right) }+P_{X_{n}\cap \mathcal{N}\left(
T\right) ^{\bot }},%
\end{array}%
\right\} \quad \text{ for }n\geq n_{\ast }\text{.}  \label{eqn3.6}
\end{equation}%
From (\ref{eqn3.5}) and (\ref{eqn3.1}) it is clear that, for $n\geq n_{\ast
} $,
\begin{equation}
\left\Vert T_{n}^{\dag }y-T^{\dag }y\right\Vert \leq \sqrt{1+\tan ^{2}\theta
_{n}}\mathrm{dist\hspace{0.15em}}\left( T^{\dag }y,X_{n}\right) ,\quad y\in
\mathcal{D}\left( T^{\dag }\right) .  \label{eqn3.1_1}
\end{equation}%
Next, we need only to show that, for $n\geq n_{\ast }$,
\begin{equation}
\theta _{n}=0\Longleftrightarrow T_{n}^{\dag }=P_{X_{n}}T^{\dag }\text{ on }%
\mathcal{D}\left( T^{\dag }\right) \Longleftrightarrow \mathcal{N}\left(
T\right) +T^{\ast }T\left( X_{n}\right) \subseteq X_{n}.  \label{eqn3.1_2}
\end{equation}

If $\theta _{n}=0$ ($n\geq n_{\ast }$), then from (\ref{eqn3.1_1}) and ${%
\mathcal{R}(T_{n}^{\dag })}={\mathcal{N}(T_{n})}^{\perp }\subseteq X_{n}$ it
is clear that%
\begin{equation*}
T_{n}^{\dag }=P_{X_{n}}T^{\dag }\text{ on }\mathcal{D}\left( T^{\dag
}\right) ,\quad n\geq n_{\ast }\text{.}
\end{equation*}

If the above equality is valid, it follows from (\ref{eqn3.4}) and (\ref%
{eqn3.6}) that%
\begin{equation*}
Q_{n}=T_{n}^{\dag }T=P_{X_{n}}P_{\mathcal{N}\left( T\right) ^{\bot
}}=P_{X_{n}\cap \mathcal{N}\left( T\right) ^{\bot }},\quad n\geq n_{\ast }%
\text{,}
\end{equation*}%
and therefore $Q_{n}$ is an orthogonal-projection. By use of (\ref{eqn3.0_1}%
) (of Lemma \ref{lem3.1}), there hold%
\begin{eqnarray*}
T^{\ast }T\left( X_{n}\right) &=&\mathcal{N}\left( Q_{n}\right) ^{\bot }=%
\mathcal{R}\left( Q_{n}\right) =T^{\dag }T\left( X_{n}\right) \\
&\subseteq &T^{-1}T\left( X_{n}\right) =X_{n}+\mathcal{N}\left( T\right)
=X_{n},\quad n\geq n_{\ast }\text{,}
\end{eqnarray*}%
this gives%
\begin{equation*}
T^{\ast }T\left( X_{n}\right) +\mathcal{N}\left( T\right) \subseteq
X_{n},\quad n\geq n_{\ast }\text{.}
\end{equation*}

If the above inclusions hold, then%
\begin{equation*}
T^{\ast }T\left( X_{n}\right) \subseteq T^{\dag }T\left( X_{n}\right) ,\quad
n\geq n_{\ast },
\end{equation*}%
and therefore%
\begin{equation*}
T^{\dag }T\left( X_{n}\right) =T^{\ast }T\left( X_{n}\right) ,\quad n\geq
n_{\ast }.
\end{equation*}%
This gives that%
\begin{equation*}
\sin \theta _{n}=\limfunc{gap}\left( T^{\dag }T\left( X_{n}\right) ,T^{\ast
}T\left( X_{n}\right) \right) =0,\quad n\geq n_{\ast },
\end{equation*}%
that is, $\theta _{n}=0$ ($n\geq n_{\ast }$). Now, (\ref{eqn3.1_2}) is
proved.
\end{proof}

\begin{remark}
\label{Remark3.0} From the proof of Theorem \ref{the1.1}, we obtain that,
even if $\dim \mathcal{N}\left( T\right) =\infty $,
\begin{equation*}
\text{(\ref{eqn1.3})}\Longrightarrow \left\{
\begin{array}{l}
\text{(\ref{eqn1.4}),} \\
\sup_{n}\theta _{n}<\frac{\pi }{2}.%
\end{array}%
\right.
\end{equation*}
\end{remark}

\section{Examples and Remarks}

In this section, the two unconvergence examples of Seidman and Du will be
restudied under the concepts of offset angle and kernel approximability.
This can further explain the relations among the three concepts of strong
convergence,\ offset angle\emph{\ }and kernel approximability, and also
leads to some remarks on Theorems \ref{the1.0} and \ref{the1.1}.

\begin{example}[{Seidman's Example \protect\cite[Example 3.1]%
{seidman1980nonconvergence}}]
Let
\begin{equation*}
X:=l^{2},\quad X_{n}:=\mathrm{span\hspace{0.15em}}\left\{ \mathbf{e}^{1},%
\mathbf{e}^{2},\ldots ,\mathbf{e}^{n}\right\} ,
\end{equation*}%
where%
\begin{equation*}
\mathbf{e}^{k}=(0,\ldots ,0,\underset{\overset{\uparrow }{k\mathrm{th\hspace{%
0.15em}}}}{1},0,\ldots )\quad \forall \ k\in
%TCIMACRO{\U{2115} }%
%BeginExpansion
\mathbb{N}
%EndExpansion
,
\end{equation*}%
and $T:X\rightarrow X$ be given in the form
\begin{equation*}
T\mathbf{x}:=\sum\limits_{k=1}^{\infty }\left( \alpha _{k}\xi _{k}+\beta
_{k}\xi _{1}\right) \mathbf{e}^{k}\quad \forall \mathbf{x}%
=\sum\limits_{k=1}^{\infty }\xi _{k}\mathbf{e}^{k}\in X,
\end{equation*}%
with
\begin{equation*}
\alpha _{j}:=\left\{
\begin{array}{rl}
j^{-1}, & \quad j\text{ odd,} \\
j^{-3}, & \quad j\text{ even,}%
\end{array}%
\right. \qquad \beta _{j}:=\left\{
\begin{array}{rl}
0, & \quad j=1, \\
j^{-1}, & \quad j>1\text{.}%
\end{array}%
\right.
\end{equation*}%
This defines a compact, injective linear operator $T$ with dense range, its
LPA\textbf{\ }$\{(X_{n},T_{n})\}_{n\in
%TCIMACRO{\U{2115} }%
%BeginExpansion
\mathbb{N}
%EndExpansion
}$ has kernel approximability ((\ref{eqn1.4}) holds), and the offset angle
sequence $\left\{ \theta _{n}\right\} $ satisfies%
\begin{equation}
0<\theta _{n}=\arcsin \limfunc{gap}(X_{n},T^{\ast }T\left( X_{n}\right) )<%
\frac{\pi }{2}\ \text{(}\forall \,n\text{),}  \label{eqn4.0}
\end{equation}%
and%
\begin{equation}
\sup_{n}\theta _{n}=\frac{\pi }{2}.  \label{eqn4.1}
\end{equation}
\end{example}

\begin{remark}
For the LPA $\{(X_{n},T_{n})\}_{n\in
%TCIMACRO{\U{2115} }%
%BeginExpansion
\mathbb{N}
%EndExpansion
}$ of Seidman's example, the weak/strong convergence or the condition (\ref%
{eqn1.3}) do not hold. The reason for such a unconvergence is caused by the
offset angle $\theta _{n}$\ tending towards perpendicular angle as (\ref%
{eqn4.1}) shows. \textbf{Note}: LPA $\{(X_{n},T_{n})\}_{n\in
%TCIMACRO{\U{2115} }%
%BeginExpansion
\mathbb{N}
%EndExpansion
}$\ has kernel-approximability since $\mathcal{N}(T)=\left\{ 0\right\} $,
and therefore has inverse-graph approximability and bounded-weak convergence%
\textbf{\ }by Theorem \ref{the1.0}.
\end{remark}

\begin{proof}
It is easy to see that
\begin{equation*}
T^{\ast }\mathbf{y}=\sum\limits_{k=1}^{\infty }\eta _{k}(\overline{\alpha
_{k}}\mathbf{e}^{k}+\overline{\beta _{k}}\mathbf{e}^{1})\quad \forall
\mathbf{y}=\sum\limits_{k=1}^{\infty }\eta _{k}\mathbf{e}^{k}\in X,
\end{equation*}%
and
\begin{equation*}
T^{\ast }T\mathbf{x}=\sum\limits_{k=1}^{\infty }(\alpha _{k}\xi _{k}+\beta
_{k}\xi _{1})(\overline{\alpha _{k}}\mathbf{e}^{k}+\overline{\beta _{k}}%
\mathbf{e}^{1})\quad \forall \mathbf{x}=\sum\limits_{k=1}^{\infty }\xi _{k}%
\mathbf{e}^{k}\in X.
\end{equation*}%
It is clear that
\begin{equation*}
\mathcal{N}(T)=\left\{ 0\right\} \subseteq X_{n}\ \text{(}\forall \,n\text{%
),\qquad }T^{\dag }=T^{-1}:\mathcal{R}\left( T\right) \subseteq X\rightarrow
X,
\end{equation*}%
\begin{equation*}
\frac{\pi ^{2}}{6}\mathbf{e}^{1}+\sum\limits_{k=2}^{\infty }\alpha _{k}\beta
_{k}\mathbf{e}^{k}=T^{\ast }T\mathbf{e}^{1}\in T^{\ast }T\left( X_{n}\right)
\setminus X_{n}\ \text{(}\forall \,n\text{).}
\end{equation*}%
This implies (\ref{eqn1.4}) and (\ref{eqn4.0}).\ Next, we will show (\ref%
{eqn4.1}) holds.

We rewrite $T^{\ast }T\mathbf{x}$ for $\mathbf{x=x}_{n}:\mathbf{=}%
\sum\limits_{k=1}^{n}\xi _{k}\mathbf{e}^{k}\in X_{n}$ ($\xi _{k}:=0\ \forall
k\geq n+1$),%
\begin{eqnarray*}
T^{\ast }T\mathbf{x}_{n} &=&\left[ \sum\limits_{k=2}^{\infty }(\alpha
_{k}\xi _{k}+\beta _{k}\xi _{1})\beta _{k}+\alpha _{1}^{2}\xi _{1}\right]
\mathbf{e}^{1} \\
&&+\sum\limits_{k=2}^{n}(\alpha _{k}\xi _{k}+\beta _{k}\xi _{1})\alpha _{k}%
\mathbf{e}^{k}+\sum\limits_{k=n+1}^{\infty }\alpha _{k}\beta _{k}\xi _{1}%
\mathbf{e}^{k},
\end{eqnarray*}%
then we have
\begin{eqnarray*}
P_{X_{n}}(T^{\ast }T\mathbf{x}_{n}) &=&\left[ \sum\limits_{k=2}^{\infty
}(\alpha _{k}\xi _{k}+\beta _{k}\xi _{1})\beta _{k}+\alpha _{1}^{2}\xi _{1}%
\right] \mathbf{e}^{1} \\
&&+\sum\limits_{k=2}^{n}(\alpha _{k}\xi _{k}+\beta _{k}\xi _{1})\alpha _{k}%
\mathbf{e}^{k},
\end{eqnarray*}%
and
\begin{equation*}
P_{X_{n}^{\bot }}(T^{\ast }T\mathbf{x}_{n})=\sum\limits_{k=n+1}^{\infty
}\alpha _{k}\beta _{k}\xi _{1}\mathbf{e}^{k}.
\end{equation*}%
Now we take $\mathbf{x}_{n}$ that satisfies
\begin{equation*}
\xi _{1}=1,\ \xi _{k}=-\frac{\beta _{k}\xi _{1}}{\alpha _{k}},\quad
k=2,...,n-1,
\end{equation*}%
and let $\lambda :=\alpha _{n}\xi _{n}+\beta _{n}\xi _{1}$ be undetermined,
we have%
\begin{eqnarray*}
T^{\ast }T\mathbf{x}_{n} &=&\left[ \lambda \beta _{n}+\alpha
_{1}^{2}+\sum\limits_{k=n+1}^{\infty }\beta _{k}^{2}\right] \mathbf{e}%
^{1}+\lambda \alpha _{n}\mathbf{e}^{n}+\sum\limits_{k=n+1}^{\infty }\alpha
_{k}\beta _{k}\mathbf{e}^{k} \\
&=&\left( \lambda \beta _{n}+c_{n}^{2}\right) \mathbf{e}^{1}+\lambda \alpha
_{n}\mathbf{e}^{n}+\sum\limits_{k=n+1}^{\infty }\alpha _{k}\beta _{k}\mathbf{%
e}^{k},
\end{eqnarray*}%
where%
\begin{equation*}
c_{n}=\alpha _{1}^{2}+\sum\limits_{k=n+1}^{\infty }\beta _{k}^{2}\leq
\sum\limits_{k=1}^{\infty }\frac{1}{k^{2}}=\frac{\pi ^{2}}{6}.
\end{equation*}%
Take $\lambda =\frac{-c_{n}\beta _{n}}{\alpha _{n}^{2}+\beta _{n}^{2}}$,
then
\begin{equation*}
\left\{
\begin{array}{l}
\Vert T^{\ast }T\mathbf{x}_{n}\Vert ^{2}=\frac{c_{n}^{2}\alpha _{n}^{2}}{%
\alpha _{n}^{2}+\beta _{n}^{2}}+\sum\limits_{k=n+1}^{\infty }(\alpha
_{k}\beta _{k})^{2}\leq \frac{\frac{\pi ^{4}}{36}\alpha _{n}^{2}}{\alpha
_{n}^{2}+\beta _{n}^{2}}+\sum\limits_{k=n+1}^{\infty }(\alpha _{k}\beta
_{k})^{2}, \\
\left\Vert P_{X_{n}^{\bot }}(T^{\ast }T\mathbf{x}_{n})\right\Vert
^{2}=\sum\limits_{k=n+1}^{\infty }\left( \alpha _{k}\beta _{k}\right) ^{2},%
\end{array}%
\right.
\end{equation*}%
Note that, for $n$ even,
\begin{equation*}
\frac{\frac{\pi ^{4}}{36}\alpha _{n}^{2}}{\alpha _{n}^{2}+\beta _{n}^{2}}=%
\frac{O(n^{-6})}{O(n^{-6})+O(n^{-2})}=O(n^{-4}),\quad
\sum\limits_{k=n+1}^{\infty }(\alpha _{k}\beta _{k})^{2}=O(n^{-3}).
\end{equation*}%
According to the definition of $\delta (T^{\ast }T\left( X_{n}\right)
,X_{n}) $, we have
\begin{equation*}
\delta (T^{\ast }T\left( X_{n}\right) ,X_{n})=\sup_{\mathbf{x}\in X_{n}}%
\frac{\Vert P_{X_{n}^{\bot }}(T^{\ast }T\mathbf{x})\Vert }{\Vert T^{\ast }T%
\mathbf{x}\Vert }.
\end{equation*}%
So for $n$ even, we obtain that
\begin{eqnarray*}
\left[ \limfunc{gap}(X_{n},T^{\ast }T\left( X_{n}\right) )\right] ^{2} &=&%
\left[ \delta (T^{\ast }TX_{n},X_{n})\right] ^{2}\qquad \text{(by Lemma \ref%
{lem3.0_2})} \\
&\geq &\frac{1}{1+O(n^{-1})}.
\end{eqnarray*}%
This gives (\ref{eqn4.1}).
\end{proof}

\begin{example}[Best-LPA]
Let $K:X\rightarrow Y$ be a compact linear operator, $\left\{ \left( \sigma
_{n};v_{n},u_{n}\right) \right\} _{n=0}^{\infty }$ be the singular system
for $K$. Then the $\left\{ \sigma _{n}^{2}\right\} $ are the nonzero
eigenvalues of the self-adjoint operator $K^{\ast }K$ (and also of $KK^{\ast
}$), written down in decreasing order with multiplicity, $\sigma _{n}>0$,
the $\left\{ v_{n}\right\} _{n=0}^{\infty }$\ is a corresponding complete
orthonormal system of eigenvectors of $K^{\ast }K$ (which spans $\overline{%
\mathcal{R}(K^{\ast })}=\overline{\mathcal{R}(K^{\ast }K)}$), and the $%
\left\{ u_{n}\right\} _{n=0}^{\infty }$ is defined by vectors%
\begin{equation*}
u_{n}:=\frac{Kv_{n}}{\left\Vert Kv_{n}\right\Vert }\quad \left( n=0,1,\cdots
\right) .
\end{equation*}%
The $\left\{ u_{n}\right\} _{n=0}^{\infty }$\ is a complete orthonormal
system of eigenvectors of $KK^{\ast }$ and span $\overline{\mathcal{R}(K)}=%
\overline{\mathcal{R}(KK^{\ast })}$, and the following formulae hold:%
\begin{equation*}
Kv_{n}=\sigma _{n}u_{n},\quad K^{\ast }u_{n}=\sigma _{n}v_{n}.
\end{equation*}%
Now, if $\dim \mathcal{N}\left( K\right) <\infty $, and if $\left\{
m_{n}\right\} \subseteq
%TCIMACRO{\U{2115} }%
%BeginExpansion
\mathbb{N}
%EndExpansion
$ is an increasing sequence with $\lim_{n\rightarrow \infty }m_{n}=\infty $,
take%
\begin{equation*}
X_{n}:=\mathcal{N}\left( K\right) +\mathrm{span\hspace{0.15em}}\left\{
v_{1},v_{2},\cdots ,v_{m_{n}}\right\} \quad \forall n\in \mathbb{N}.
\end{equation*}%
Then%
\begin{equation*}
\mathcal{N}\left( K\right) +K^{\ast }K\left( X_{n}\right) =\mathcal{N}\left(
K\right) +\mathrm{span\hspace{0.15em}}\left\{ \sigma _{1}^{2}v_{1},\sigma
_{2}^{2}v_{2},\cdots ,\sigma _{m_{n}}^{2}v_{m_{n}},\right\} \subseteq X_{n}.
\end{equation*}%
In this case, we have the best LPA $\{(X_{n},K_{n})\}_{n\in
%TCIMACRO{\U{2115} }%
%BeginExpansion
\mathbb{N}
%EndExpansion
}$ for $K$:
\begin{equation*}
\mathcal{N}\left( K\right) \cap X_{n}=\mathcal{N}\left( K\right) ,\quad
\theta _{n}:=\arcsin \limfunc{gap}(K^{\dag }K\left( X_{n}\right) ,K^{\ast
}K\left( X_{n}\right) )=0\ \text{(}\forall \,n\text{),}
\end{equation*}%
and
\begin{equation*}
K_{n}^{\,\dag }=P_{X_{n}}K^{\dag }\text{ on }\mathcal{D}\left( K^{\dag
}\right) .
\end{equation*}
\end{example}

\begin{example}[{Du's Example \protect\cite[Example 2.10]{du2008finite}}]
Let
\begin{equation*}
X:=l^{2},\quad X_{n}:=\mathrm{span\hspace{0.15em}}\left\{ \mathbf{e}^{1},%
\mathbf{e}^{2},\ldots ,\mathbf{e}^{n}\right\} ,
\end{equation*}%
and $T:X\rightarrow X$ defined as follows:%
\begin{equation*}
Tx:=x-\left\langle x,\mathbf{e}\right\rangle \mathbf{e}\quad \text{ }\left(
x\in X\right) ,
\end{equation*}%
where $\left\langle \cdot ,\cdot \right\rangle $ is the inner product on $X$
and%
\begin{equation*}
\mathbf{e}:=\frac{\sqrt{3}}{2}\mathbf{e}^{1}+\frac{\sqrt{3}}{2^{2}}\mathbf{e}%
^{2}+\cdots +\frac{\sqrt{3}}{2^{k}}\mathbf{e}^{k}+\cdots .
\end{equation*}%
This operator satisfies%
\begin{equation*}
T=T^{2}=T^{\ast }=T^{\dag }\quad \text{and\quad }\mathcal{N}\left( T\right) =%
\mathrm{span\hspace{0.15em}}\left\{ \mathbf{e}\right\} ;
\end{equation*}%
that is, $T$ is the orthogonal projection of $X$ onto $\mathbf{e}^{\bot }$.
For its LPA $\{(X_{n},T_{n})\}_{n\in
%TCIMACRO{\U{2115} }%
%BeginExpansion
\mathbb{N}
%EndExpansion
}$, where%
\begin{equation*}
T_{n}:=TP_{X_{n}},\quad P_{n}:=P_{X_{n}},\quad n\in
%TCIMACRO{\U{2115} }%
%BeginExpansion
\mathbb{N}
%EndExpansion
,
\end{equation*}%
we have the following facts:\newline
\textrm{(a)} \ The offset angle $\theta _{n}$ satisfies%
\begin{equation*}
\theta _{n}:=\arcsin \limfunc{gap}(T^{\dag }T\left( X_{n}\right) ,T^{\ast
}T\left( X_{n}\right) )=0\ \text{(}\forall \,n\text{).}
\end{equation*}%
\textrm{(b)} \ The kernel approximability is invalid since%
\begin{equation*}
\mathcal{N}\left( T\right) \cap X_{n}=\left\{ 0\right\} \neq \mathcal{N}%
\left( T\right) \ \text{(}\forall \,n\text{).}
\end{equation*}%
\textrm{(c)} \ $T_{n}^{\dag }y=P_{n}y-4^{n}\left\langle \left(
I-P_{n}\right) y,e\right\rangle P_{n}e\ \left( \forall \ y\in X\right) $,
and for any fixed $y\in X,$%
\begin{equation*}
\left\{
\begin{array}{l}
\underset{n}{\sup }\left\Vert T_{n}^{\dag }y\right\Vert <\infty
\Longleftrightarrow \,\underset{n}{\sup }4^{n}\left\vert \left\langle \left(
I-P_{n}\right) y,e\right\rangle \right\vert <\infty \,,\medskip \\
\underset{n\rightarrow \infty }{\mathrm{w\hspace{0.15em}}\text{-}\lim }%
T_{n}^{\dag }y=T^{\dag }y\Longleftrightarrow \,\underset{n\rightarrow \infty
}{\lim }4^{n}\left\langle \left( I-P_{n}\right) y,e\right\rangle
=\left\langle y,e\right\rangle ;%
\end{array}%
\right.
\end{equation*}%
taking
\begin{equation*}
y:=\Bigg(\frac{\sqrt{3}}{4},\frac{3\sqrt{3}}{4^{2}},\ldots ,\frac{\left(
2^{n}-1\right) \sqrt{3}}{4^{n}},\ldots \Bigg),
\end{equation*}%
there hold%
\begin{equation*}
\underset{n}{\sup }\left\Vert T_{n}^{\dag }y\right\Vert <\infty \quad \text{%
and\quad }\underset{n\rightarrow \infty }{\mathrm{w\hspace{0.15em}}\text{-}%
\lim }T_{n}^{\dag }y\neq T^{\dag }y.
\end{equation*}%
That indicates that the bounded-weak convergence is invalid.\newline
\textrm{(d)} \ For any fixed $y\in \mathcal{D}\left( T^{\dag }\right) =X,$%
\begin{equation*}
\underset{n\rightarrow \infty }{\mathrm{w\hspace{0.15em}}\text{-}\lim }%
T_{n}^{\dag }y=T^{\dag }y\Longleftrightarrow \,\underset{n\rightarrow \infty
}{\mathrm{s\hspace{0.15em}}\text{-}\lim }T_{n}^{\dag }y=T^{\dag
}y\Longleftrightarrow \,\underset{n\rightarrow \infty }{\overline{\lim }}%
\left\Vert T_{n}^{\dag }y\right\Vert \leq \left\Vert T^{\dag }y\right\Vert
\,.
\end{equation*}
\end{example}

\begin{remark}
For the LPA $\{(X_{n},T_{n})\}_{n\in
%TCIMACRO{\U{2115} }%
%BeginExpansion
\mathbb{N}
%EndExpansion
}$ of Du's example, the weak/strong convergence or the condition (\ref%
{eqn1.3}) do not hold. The reason for unconvergence is caused by the fact
that kernel approximability is invalid.
\end{remark}

\begin{proof}
(a) \ Since $T^{\dag }=T=T^{\ast }$, it is clear that $\theta _{n}=0\ $($%
\forall \,n$).

(b) \ See \cite[Proposition A.1]{du2008finite}.

(c) \ See \cite[Propositions A.2 and A.3]{du2008finite}.

(d) \ For any fixed $y\in X$,\ if $\mathrm{w\hspace{0.15em}}$-$%
\lim_{n\rightarrow \infty }T_{n}^{\dag }y=T^{\dag }y$, by (c) and $T^{\dag
}=T$,%
\begin{equation*}
\lim_{n\rightarrow \infty }4^{n}\left\langle \left( I-P_{n}\right)
y,e\right\rangle =\left\langle y,e\right\rangle ,
\end{equation*}
and therefore%
\begin{eqnarray*}
\left\Vert T_{n}^{\dag }y-T^{\dag }y\right\Vert &=&\left\Vert
P_{n}y-4^{n}\left\langle \left( I-P_{n}\right) y,\mathbf{e}\right\rangle
P_{n}\mathbf{e}-y+\left\langle y,\mathbf{e}\right\rangle \mathbf{e}%
\right\Vert \\
&\leq &\left\Vert P_{n}y-y\right\Vert +\left\vert 4^{n}\left\langle \left(
I-P_{n}\right) y,\mathbf{e}\right\rangle -\left\langle y,\mathbf{e}%
\right\rangle \right\vert +\left\vert \left\langle y,\mathbf{e}\right\rangle
\right\vert \left\Vert P_{n}\mathbf{e}-\mathbf{e}\right\Vert \\
&\rightarrow &0\quad \left( n\rightarrow \infty \right) ,
\end{eqnarray*}%
that is, $\mathrm{s\hspace{0.15em}}$-$\lim_{n\rightarrow \infty }T_{n}^{\dag
}y=T^{\dag }y$.

If $\mathrm{s\hspace{0.15em}}$-$\lim_{n\rightarrow \infty }T_{n}^{\dag
}y=T^{\dag }y$ holds, then $\overline{\lim }_{n\rightarrow \infty
}\left\Vert T_{n}^{\dag }y\right\Vert \leq \left\Vert T^{\dag }y\right\Vert $%
.

If $y\in \mathcal{D}\left( T^{\dag }\right) $ and $\overline{\lim }%
_{n\rightarrow \infty }\left\Vert T_{n}^{\dag }y\right\Vert \leq \left\Vert
T^{\dag }y\right\Vert $, then any subsequence $\left\{ T_{n_{k}}^{\dag
}y\right\} $ of $\left\{ T_{n}^{\dag }y\right\} $ has a subsequence, again
denoted by $\left\{ T_{n_{k}}^{\dag }y\right\} $, converging weakly to some $%
u\in X$, and%
\begin{equation}
\left\Vert u\right\Vert \leq \underset{k\rightarrow \infty }{\underline{\lim
}}\left\Vert T_{n_{k}}^{\dag }y\right\Vert \leq \underset{k\rightarrow
\infty }{\overline{\lim }}\left\Vert T_{n_{k}}^{\dag }y\right\Vert \leq
\underset{n\rightarrow \infty }{\overline{\lim }}\left\Vert T_{n}^{\dag
}y\right\Vert \leq \left\Vert T^{\dag }y\right\Vert .  \label{eqn4.2}
\end{equation}%
Since $T$ is bounded and%
\begin{equation*}
\left\{
\begin{array}{l}
\mathcal{N}\left( T_{n}\right) ^{\bot }=\left( \mathcal{N}\left( T\right)
\cap X_{n}\right) ^{\bot }\cap X_{n}\subseteq X_{n}, \\
\underset{n\rightarrow \infty }{\mathrm{s\hspace{0.15em}}\!\text{%
-\negthinspace }\lim }P_{\mathcal{R}\left( T_{n}\right) }=P_{\overline{%
\mathcal{R}\left( T\right) }},%
\end{array}%
\right. \text{(by statement (a) of lemma \ref{lem2.0}) }
\end{equation*}%
we have that, for all $v\in X$,
\begin{eqnarray*}
\left\langle T\left( u-T^{\dag }y\right) ,v\right\rangle &=&\left\langle
Tu-TT_{n_{k}}^{\dag }y,v\right\rangle +\left\langle TT_{n_{k}}^{\dag
}y-TT^{\dag }y,v\right\rangle \\
&=&\left\langle u-T_{n_{k}}^{\dag }y,T^{\ast }v\right\rangle +\left\langle
T_{n_{k}}T_{n_{k}}^{\dag }y-TT^{\dag }y,v\right\rangle \\
&=&\left\langle u-T_{n_{k}}^{\dag }y,T^{\ast }v\right\rangle +\left\langle
P_{\mathcal{R}\left( T_{n_{k}}\right) }y-P_{\overline{\mathcal{R}\left(
T\right) }}y,v\right\rangle \\
&\rightarrow &0\quad \left( k\rightarrow \infty \right) \text{,}
\end{eqnarray*}%
and hence
\begin{equation*}
u\in T^{\dag }y+\mathcal{N}\left( T\right) ,\quad \left\Vert u\right\Vert
^{2}=\left\Vert T^{\dag }y\right\Vert ^{2}+\left\Vert u-T^{\dag
}y\right\Vert ^{2}.
\end{equation*}%
This with (\ref{eqn4.2}) implies that%
\begin{equation*}
\underset{k\rightarrow \infty }{\mathrm{w\hspace{0.15em}}\text{-}\lim }%
T_{n_{k}}^{\dag }y=u=T^{\dag }y\quad \text{with\quad }\lim_{k\rightarrow
\infty }\left\Vert T_{n_{k}}^{\dag }y\right\Vert =\left\Vert T^{\dag
}y\right\Vert ,
\end{equation*}%
that is,%
\begin{equation*}
\underset{k\rightarrow \infty }{\mathrm{s\hspace{0.15em}}\text{-}\lim }%
T_{n_{k}}^{\dag }y=T^{\dag }y.
\end{equation*}%
Thus, every subsequence of $\left\{ T_{n}^{\dag }y\right\} $ has a
subsequence converging weakly to $T^{\dag }y$ and hence%
\begin{equation*}
\underset{n\rightarrow \infty }{\mathrm{w\hspace{0.15em}}\text{-}\lim }%
T_{n}^{\dag }y=T^{\dag }y,\quad \text{in fact,\quad }\underset{n\rightarrow
\infty }{\mathrm{s\hspace{0.15em}}\text{-}\lim }T_{n}^{\dag }y=T^{\dag }y.
\end{equation*}%
Note that the proof of this paragraph applies to a general LPA!
\end{proof}

\begin{remark}
From the above proof we actually obtain a more general result compared with
the one obtained by Luecke and Hickey \cite[Theorem 11]{Luecke-Hickey1985},
who give the result in the situation of LPA $\{(X_{n},T_{n})\}$ with an
increasing sequence:
\begin{equation*}
X_{0}\subseteq X_{1}\subseteq X_{2}\subseteq \cdots \text{\quad with}\quad
\overline{\overset{\infty }{\underset{n=0}{\cup }}X_{n}}=X\,.
\end{equation*}%
The extended version is as below:
\end{remark}

\begin{proposition}
Let $T\in \mathcal{B}\left( X,Y\right) $ have LPA $\{(X_{n},T_{n})\}_{n\in
%TCIMACRO{\U{2115} }%
%BeginExpansion
\mathbb{N}
%EndExpansion
}$. For a fixed $y\in \mathcal{D}\left( T^{\dag }\right) $ there holds%
\begin{equation*}
\underset{n\rightarrow \infty }{\mathrm{s\hspace{0.15em}}\text{-}\lim }%
T_{n}^{\dag }y=T^{\dag }y\,\Longleftrightarrow \,\underset{n\rightarrow
\infty }{\overline{\lim }}\left\Vert T_{n}^{\dag }y\right\Vert \leq
\left\Vert T^{\dag }y\right\Vert .\qquad \text{\emph{(True)}}
\end{equation*}
\end{proposition}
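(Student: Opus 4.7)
The plan is to mimic the argument written out for part (d) of Du's example, which the author explicitly flags as being valid for an arbitrary LPA. The proposition is a two-sided statement, so I would handle the two directions separately.

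For the easy direction ($\Rightarrow$), if $\mathrm{s}\text{-}\lim_{n\to\infty} T_n^\dag y = T^\dag y$, then $\lim_{n\to\infty}\|T_n^\dag y\| = \|T^\dag y\|$ by continuity of the norm, so $\overline{\lim}_{n\to\infty}\|T_n^\dag y\| \leq \|T^\dag y\|$.

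For the substantive direction ($\Leftarrow$), I would use a subsequence-of-subsequences argument. Pick an arbitrary subsequence $\{T_{n_k}^\dag y\}$; by the hypothesis it is bounded, so by reflexivity of $X$ it has a weakly convergent further subsequence (again denoted $\{T_{n_k}^\dag y\}$), with weak limit some $u \in X$. My goal is to show $u = T^\dag y$ with $\|T_{n_k}^\dag y\|\to\|T^\dag y\|$, which upgrades weak convergence to strong convergence. First, weak lower semicontinuity of the norm plus the hypothesis gives
\begin{equation*}
\|u\| \leq \underline{\lim}_{k\to\infty}\|T_{n_k}^\dag y\| \leq \overline{\lim}_{n\to\infty}\|T_n^\dag y\| \leq \|T^\dag y\|.
\end{equation*}
Next, I would show $u - T^\dag y \in \mathcal{N}(T)$ by checking $\langle T(u-T^\dag y), v\rangle = 0$ for every $v \in Y$. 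Writing
\begin{equation*}
\langle T(u-T^\dag y), v\rangle = \langle u - T_{n_k}^\dag y, T^\ast v\rangle + \langle TT_{n_k}^\dag y - TT^\dag y, v\rangle,
\end{equation*}
the first term vanishes in the limit by weak convergence, and for the second I would use $TT_{n_k}^\dag y = T_{n_k}T_{n_k}^\dag y = P_{\mathcal{R}(T_{n_k})} y$ and $TT^\dag y = P_{\overline{\mathcal{R}(T)}} y$ together with Lemma \ref{lem2.0}(a), which gives $\mathrm{s}\text{-}\lim_{n\to\infty} P_{\mathcal{R}(T_n)} = P_{\overline{\mathcal{R}(T)}}$.

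Having established $u \in T^\dag y + \mathcal{N}(T)$, the orthogonal decomposition $\mathcal{N}(T) \perp \mathcal{N}(T)^\bot \ni T^\dag y$ yields $\|u\|^2 = \|T^\dag y\|^2 + \|u - T^\dag y\|^2$. Combined with $\|u\|\leq \|T^\dag y\|$, this forces $u = T^\dag y$ and $\lim_{k\to\infty}\|T_{n_k}^\dag y\| = \|T^\dag y\|$, which in a Hilbert space promotes weak to strong convergence: $\mathrm{s}\text{-}\lim_{k\to\infty} T_{n_k}^\dag y = T^\dag y$. Since every subsequence of $\{T_n^\dag y\}$ has a sub-subsequence converging strongly to the same limit $T^\dag y$, the whole sequence converges strongly to $T^\dag y$.

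The main (mild) obstacle is verifying $u - T^\dag y \in \mathcal{N}(T)$; this is the step where one must be careful to use the right piece of Lemma \ref{lem2.0}(a), namely that the range projections converge strongly, so that the difference $P_{\mathcal{R}(T_{n_k})} y - P_{\overline{\mathcal{R}(T)}} y$ actually tends to zero. Everything else is essentially the norm-lower-semicontinuity plus Pythagoras argument already present in the proof of (d), with no use of the special structure of Du's example.
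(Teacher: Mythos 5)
Your proposal is correct and follows essentially the same route as the paper, which proves this proposition inside part (d) of Du's example via exactly the subsequence/weak-limit argument, the identity $TT_{n_k}^{\dag}y=T_{n_k}T_{n_k}^{\dag}y=P_{\mathcal{R}(T_{n_k})}y$ together with $\mathrm{s}\text{-}\lim_{n}P_{\mathcal{R}(T_n)}=P_{\overline{\mathcal{R}(T)}}$ from Lemma \ref{lem2.0}(a), and the Pythagorean identity forcing $u=T^{\dag}y$. The only cosmetic difference is that you state the trivial forward direction separately, and you correctly quantify $v\in Y$ where the paper has a typo ($v\in X$).
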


\begin{remark}
It should be noted that the following proposition is false.
\end{remark}

\begin{proposition}
Let $T\in \mathcal{B}\left( X,Y\right) $ have LPA $\{(X_{n},T_{n})\}_{n\in
%TCIMACRO{\U{2115} }%
%BeginExpansion
\mathbb{N}
%EndExpansion
}$. For a fixed $y\in \mathcal{D}\left( T^{\dag }\right) $,%
\begin{equation*}
\underset{n\rightarrow \infty }{\mathrm{\limfunc{w}\hspace{0.15em}}\text{-}%
\lim }T_{n}^{\dag }y=T^{\dag }y\,\Longleftrightarrow \,\sup_{n}\left\Vert
T_{n}^{\dag }y\right\Vert <+\infty .\qquad \text{\emph{(False)}}
\end{equation*}
\end{proposition}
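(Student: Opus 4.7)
The plan is to refute the stated equivalence by producing a single triple $(T,\{X_n\},y)$ for which the right-hand side holds while the left-hand side fails. Note first that the forward implication ``$\Longrightarrow$'' is actually valid: a weakly convergent sequence is automatically norm-bounded by the uniform boundedness principle, so $\mathrm{w\hspace{0.15em}}\text{-}\lim T_n^{\dag }y=T^{\dag }y$ does entail $\sup_n\|T_n^{\dag }y\|<+\infty$. Consequently, the only way the proposition can be false is that boundedness of $\{T_n^{\dag }y\}$ alone need not suffice for $\mathrm{w\hspace{0.15em}}\text{-}\lim T_n^{\dag }y=T^{\dag }y$, and a single counterexample to the reverse implication completes the job.

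To produce such a counterexample, I would simply reuse Du's example as worked out above, taking $X=l^{2}$, $X_{n}=\mathrm{span}\{\mathbf{e}^{1},\ldots ,\mathbf{e}^{n}\}$, and $T=I-\langle \cdot ,\mathbf{e}\rangle \mathbf{e}$ with $\mathbf{e}=\sum_{k=1}^{\infty }(\sqrt{3}/2^{k})\mathbf{e}^{k}$, together with the specific vector
\begin{equation*}
y:=\Bigl(\tfrac{\sqrt{3}}{4},\tfrac{3\sqrt{3}}{4^{2}},\ldots ,\tfrac{(2^{n}-1)\sqrt{3}}{4^{n}},\ldots \Bigr)\in \mathcal{D}(T^{\dag })=X.
\end{equation*}
By statement \textrm{(c)} of Du's example, this $y$ already satisfies simultaneously $\sup_{n}\|T_{n}^{\dag }y\|<\infty $ and $\mathrm{w\hspace{0.15em}}\text{-}\lim_{n\to \infty }T_{n}^{\dag }y\neq T^{\dag }y$. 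So the reverse implication of the proposition fails for this $(y,T,\{X_{n}\})$, and the proposition is thereby shown false.

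There is no real technical obstacle, since every piece of input has been verified in the previous example block. What is worth emphasising in the write-up is the conceptual reason this counterexample is natural: Du's LPA fails kernel approximability (statement \textrm{(b)}), hence by Theorem \ref{the1.0} it fails bounded-weak convergence, and bounded-weak convergence is precisely the condition that allows one to deduce $\mathrm{w\hspace{0.15em}}\text{-}\lim T_{n}^{\dag }y_{n}=T^{\dag }y$ from $\sup_{n}\|T_{n}^{\dag }y_{n}\|<\infty $ together with $y_{n}\rightharpoonup y$. Fixing $y_{n}\equiv y$ does not recover the implication, and the explicit $y$ above is an exact witness of this gap, thereby highlighting the role of the whole sequence $\{y_{n}\}$ in condition \textrm{(c)} of Theorem \ref{the1.0}.
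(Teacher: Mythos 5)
Your proposal is correct and follows essentially the same route as the paper: the falsity of the proposition is witnessed precisely by the vector $y$ in statement \textrm{(c)} of Du's example, which satisfies $\sup_{n}\Vert T_{n}^{\dag }y\Vert <\infty $ while $\mathrm{w\hspace{0.15em}}$-$\lim_{n\rightarrow \infty }T_{n}^{\dag }y\neq T^{\dag }y$. Your added observation that the forward implication holds by uniform boundedness, so that only the reverse implication can fail, is a correct and worthwhile clarification.
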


\section{Conclusions}

In this paper, we propose two concepts -- the offset angle and the kernel
approximability of LPA, and show their roles in the convergence of
least-squares projection method. These concepts come from the wish to
understand two counter-examples (due to Seidman \cite%
{seidman1980nonconvergence} and Du \cite{du2008finite}) which respectively
represent two important cases in which least-squares projection method fails
to converge. Let us reformulate the concepts of the offset angles, the
kernel approximability, and the strong/weak convergence of least-squares
projection method:

\begin{itemize}
\item $\emph{Offset}$ $\emph{Angle}$:%
\begin{equation*}
\theta _{n}:=\arcsin \limfunc{gap}\left( T^{\dag }T\left( X_{n}\right)
,T^{\ast }T\left( X_{n}\right) \right) \quad \forall n\in
%TCIMACRO{\U{2115} }%
%BeginExpansion
\mathbb{N}
%EndExpansion
.
\end{equation*}

\item \emph{Kernel Approximability}:
\begin{equation*}
\mathcal{N}\left( T\right) =\left\{ x\in X\,:\lim_{n\rightarrow \infty }%
\mathrm{dist\hspace{0.15em}}\left( x,\mathcal{N}\left( T\right) \cap
X_{n}\right) =0\right\} .
\end{equation*}

\item \emph{Strong Convergence (}$\underset{n\rightarrow \infty }{\mathrm{s%
\hspace{0.15em}}\text{-}\lim }T_{n}^{\,\dag }=T^{\dag }$ on $\mathcal{D}%
\left( T^{\dag }\right) $\emph{)}:%
\begin{equation*}
\lim_{n\rightarrow \infty }T_{n}^{\,\dag }y=T^{\dag }y\quad \forall \,y\in
\mathcal{D}\left( T^{\dag }\right) .
\end{equation*}

\item \emph{Weak Convergence (}$\underset{n\rightarrow \infty }{\mathrm{w%
\hspace{0.15em}}\text{-}\lim }T_{n}^{\,\dag }=T^{\dag }$ on $\mathcal{D}%
\left( T^{\dag }\right) $\emph{)}:
\begin{equation*}
\underset{n\rightarrow \infty }{\mathrm{w\hspace{0.15em}}\text{-}\lim }%
T_{n}^{\,\dag }y=T^{\dag }y\quad \forall \,y\in \mathcal{D}\left( T^{\dag
}\right) .
\end{equation*}
\end{itemize}

To understand the relevance among the four concepts, we make a table to show
the true and false status of these concepts in the several examples we have
mentioned (T for true, F for false).%
\begin{equation*}
\begin{tabular}{|c|c|c|c|}
\hline
Examples & $\sup_{n}\theta _{n}<\frac{\pi}{2}$ & Kernel Approximability & Strong/weak
Convergence \\ \hline
Seidman's & F & T & F \\ \hline
Du's & T & F & F \\ \hline
Best-LPA & T & T & T \\ \hline
\end{tabular}%
\end{equation*}

From this table we can see that the conditions \textquotedblleft kernel
approximability\textquotedblright\ and \textquotedblleft $\sup_{n}\theta
_{n}<\frac{\pi }{2}$\textquotedblright\ are two mutually independent important
factors of a convergent LPA $\{(X_{n},T_{n})\}$. In fact, both of them are
necessary if we want the convergence of LPA $\{(X_{n},T_{n})\}$, for we have
that (see Remak \ref{Remark3.0})
\begin{equation*}
\text{weak convergence}\Longleftrightarrow \text{strong convergence}%
\Longrightarrow \left\{
\begin{array}{l}
\text{kernel approximability,} \\
\sup_{n}\theta _{n}<\frac{\pi }{2}\text{.}%
\end{array}%
\right.
\end{equation*}%
We hope the three are equivalent, if so, the problem of strong/weak
convergence of least-squares projection method could be divided into two
subproblems, namely, the kernel approximability problem and the offset angle
problem.

In this paper, the main result (Theorem \ref{the1.1}) we get is: If $\dim
\mathcal{N}\left( T\right) <\infty $, then
\begin{equation*}
\left.
\begin{array}{l}
\text{kernel approximability} \\
\sup_{n}\theta _{n}<\frac{\pi }{2}%
\end{array}%
\right\} \Longleftrightarrow \text{strong convergence}\Longleftrightarrow
\text{weak convergence,}
\end{equation*}%
and when the kernel approximability is valid, for $n$ large enough,
\begin{equation*}
\left\Vert T_{n}^{\dag }y-T^{\dag }y\right\Vert \leq \sqrt{1+\tan ^{2}\theta
_{n}}\mathrm{\limfunc{dist}}\left( T^{\dag }y,X_{n}\right) \quad \forall
y\in \mathcal{D}\left( T^{\dag }\right) .
\end{equation*}%
Now, we consider the three special cases of \textquotedblleft $\mathcal{N}%
\left( T\right) =\left\{ 0\right\} $\textquotedblright , \textquotedblleft $%
\mathcal{R}\left( T\right) =\overline{\mathcal{R}\left( T\right) }$%
\textquotedblright ,\ and \textquotedblleft $\mathcal{N}\left( T\right)
=\left\{ 0\right\} \ \&\ \mathcal{R}\left( T\right) =\overline{\mathcal{R}%
\left( T\right) }$\textquotedblright , then there hold the following
interesting corollaries:

\begin{itemize}
\item If $\mathcal{N}\left( T\right) =\left\{ 0\right\} $, then, for any LPA
$\{(X_{n},T_{n})\}_{n\in
%TCIMACRO{\U{2115} }%
%BeginExpansion
\mathbb{N}
%EndExpansion
}$ for $T$,
\begin{equation*}
\sup_{n}\theta _{n}<\frac{\pi }{2}\Longleftrightarrow \text{strong
convergence,}
\end{equation*}%
and for all $n\in
%TCIMACRO{\U{2115} }%
%BeginExpansion
\mathbb{N}
%EndExpansion
$,
\begin{equation*}
\left\Vert T_{n}^{\dag }y-T^{\dag }y\right\Vert \leq \sqrt{1+\tan ^{2}\theta
_{n}}\,\mathrm{\limfunc{dist}}\left( T^{\dag }y,X_{n}\right) \quad \forall
y\in \mathcal{D}\left( T^{\dag }\right) .
\end{equation*}

\item If $\dim \mathcal{N}\left( T\right) <\infty $ and $\mathcal{R}\left(
T\right) $ is closed, then, for any LPA $\{(X_{n},T_{n})\}_{n\in
%TCIMACRO{\U{2115} }%
%BeginExpansion
\mathbb{N}
%EndExpansion
}$ for $T$,
\begin{equation*}
\left\{
\begin{array}{l}
\text{kernel approximability}\Longleftrightarrow \text{strong convergence,}
\\[5pt]
\sup_{n}\theta _{n}<\frac{\pi }{2}\text{,}%
\end{array}%
\right.
\end{equation*}%
and when the kernel approximability holds, for $n$ large enough,
\begin{equation*}
\left\Vert T_{n}^{\dag }y-T^{\dag }y\right\Vert \leq \sqrt{1+\tan ^{2}\theta
_{n}}\,\mathrm{\limfunc{dist}}\left( T^{\dag }y,X_{n}\right) \quad \forall
y\in Y.
\end{equation*}

\item If $\mathcal{N}\left( T\right) =\left\{ 0\right\} $ and $\mathcal{R}%
\left( T\right) $ is closed, then for any LPA $\{(X_{n},T_{n})\}_{n\in
%TCIMACRO{\U{2115} }%
%BeginExpansion
\mathbb{N}
%EndExpansion
}$ for $T$,
\begin{equation*}
\sup_{n}\theta _{n}<\frac{\pi }{2}\text{,}
\end{equation*}%
and for all $n\in
%TCIMACRO{\U{2115} }%
%BeginExpansion
\mathbb{N}
%EndExpansion
$,
\begin{equation*}
\left\Vert T_{n}^{\dag }y-T^{\dag }y\right\Vert \leq \sqrt{1+\tan ^{2}\theta
_{n}}\,\mathrm{\limfunc{dist}}\left( T^{\dag }y,X_{n}\right) \quad \forall
y\in Y.
\end{equation*}%
Here, we remark that: If there are positive constants $\alpha $ and $\beta $
such that $T\in \mathcal{B}\left( X\right) $ satisfies%
\begin{equation*}
\left.
\begin{array}{l}
\left\vert \left\langle Tu,u\right\rangle \right\vert \geq \alpha \left\Vert
u\right\Vert ^{2}, \\
\left\vert \left\langle Tu,v\right\rangle \right\vert \leq \beta \left\Vert
u\right\Vert \left\Vert v\right\Vert%
\end{array}%
\right\} \text{ for all }u,v\in X,
\end{equation*}%
then $T$ has a bounded inverse $T^{-1}\in \mathcal{B}\left( X\right) $ by
Lax-Milgram theorem (see \cite[Theorem 13.26]{Kress1999}), and%
\begin{equation*}
\sup_{n}\sqrt{1+\tan ^{2}\theta _{n}}\leq \left\Vert T\right\Vert \left\Vert
T^{-1}\right\Vert \leq \frac{\beta }{\alpha }.
\end{equation*}
\end{itemize}

The significance of Theorem \ref{the1.1} is partially revealed by its
corollaries. The theorem shows us the clue to choose convergent LPA: First,
we need $X_{n}$ ($n\in
%TCIMACRO{\U{2115} }%
%BeginExpansion
\mathbb{N}
%EndExpansion
$) to guarantee the kernel approximability, which restricts the class of
choices for $X_{n}$. Then, we choose specific $X_{n}$ ($n\in
%TCIMACRO{\U{2115} }%
%BeginExpansion
\mathbb{N}
%EndExpansion
$) such that their offset angles $\theta _{n}$ are as small as possible,
because the small angles can not only guarantee the convergence but also
give us faster rate of convergence.

The kernel approximability of LPA is often easy to be satisfied because it
is a much weaker condition compared to the condition (\ref{eqn1.3}) (For
instance, in the case of $T$ being an injection, it is always true). Theorem %
\ref{the1.0} presents several equivalent conditions of kernel
approximability, which are:
\begin{eqnarray*}
\text{kernel approximability} &\Longleftrightarrow &\,\underset{n\rightarrow
\infty }{\mathrm{s\hspace{0.15em}}\text{-}\lim }P_{\mathcal{N}\left(
T_{n}\right) }=P_{\mathcal{N}\left( T\right) } \\
&\Longleftrightarrow &\,\underset{n\rightarrow \infty }{\mathrm{s\hspace{%
0.15em}}\text{-}\lim }P_{\mathcal{G}\left( T_{n}^{\dag }\right) }=P_{%
\mathcal{G}\left( T^{\dag }\right) } \\
&\Longleftrightarrow &\text{bounded-weak convergence.}
\end{eqnarray*}%
These equivalent characterizations of kernel approximability could help us
better understand it. From these equivalent characterizations, we can see
that the kernel approximability is a weaker version of convergence (that is
also the reason why we name one of its equivalent characterizations as
bounded-weak convergence).

The interesting points about offset angle and kernel approximability are the
geometrical revelations they show. To be more specific, the offset angle $%
\theta _{n}$ is just the largest canonical angle between $T^{\ast }T(X_{n})$
and $T^{\dagger }T(X_{n})$, which are subspaces of $\mathcal{N}\left(
T\right) ^{\bot }$; the kernel approximability is defined by the sequence $%
\left\{ \mathcal{N}\left( T\right) \cap X_{n}\right\} $, in which each
element $\mathcal{N}\left( T\right) \cap X_{n}$ is a subspace of $\mathcal{N}%
\left( T\right) $. Thus the angle between $T^{\ast }T(X_{n})$ and $%
T^{\dagger }T(X_{n})$ in ${\mathcal{N}(T)}^{\perp }$, and the kernel
approximability determined by the spaces $\mathcal{N}(T)\cap X_{n}$ in $%
\mathcal{N}(T)$, together geometrically depict the convergence of
least-squares projection method.

\section*{Acknowledgement}
The author Nailin Du's research was partially supported by LIESMARS of Wuhan
University (904110354) and China National Natural Science Foundations
(61179039 and 61273215).

\end{document}